\date{\today}
\newcommand{\Z}{{\mathbb Z}}
\newcommand{\R}{{\mathbb R}}
\newcommand{\C}{{\mathbb C}}
\newcommand{\PP}{{\mathbb P}}
\newcommand{\CA}{{\mathcal A}}
\newcommand{\CF}{{\mathcal F}}
\newcommand{\CH}{{\mathcal H}}
\newcommand{\CN}{{\mathcal N}}
\newcommand{\CU}{{\mathcal U}}
\def\SS{{\mathbb S}}
\def\t{{\theta}}
\def\l{{\lambda}}
\def\b{{\beta}}
\def\a{{\alpha}}
\def\e{{\varepsilon}}
\def\beq{\begin{equation}}
\def\eeq{\end{equation}}
\newcommand{\SL}{{\mathrm{SL}}}
\newtheorem{theorem}{Theorem}
\newtheorem{remark}{Remark}
\newtheorem{lemma}{Lemma}
\newtheorem{defi}{Definition}
\newtheorem{corollary}{Corollary}
\newtheorem{prop}{Proposition}
\begin{document}

\title[Uniform hyperbolicty and Spectral Theory]{Uniform hyperbolicity and its relation with spectral analysis of 1D discrete Schr\"odinger Operators}

\author[Z.\ Zhang]{Zhenghe Zhang}

\address{Department of Mathematics, University of California--Riverside, CA~92521, USA}

\email{zhenghe.zhang@ucr.edu}

\thanks{The author was in part supported by NSF grant DMS-1764154.}

\begin{abstract}
This paper intends to provide new, simple, and self-contained proofs of the equivalence of various different descriptions of the uniformly hyperbolic $\mathrm{SL}(2,\R)$ sequences. While in the scenario of the Schr\"odinger cocyles, they may in turn be applied to give new and simple proofs of theorems regarding their relation with the spectral analysis of one-dimensional discrete Schr\"odinger operators. Concretely, this paper gives four different descriptions of uniformly hyperbolic sequences together with the detailed proofs of their equivalence. It provides concise and self-contained proof of the Johnson's Theorem \cite{johnson}, both for sequence and dynamically defined potentials. In particular, we give two different proofs of the direction ``uniform hyperbolicity away from the spectrum'', of which one is the standard argument in the spirit of Russell Johnson's original proof and the other involves Combes-Thomas type of estimate \cite{combesthomas}. Finally, the relation between the Avalanche Principle, discovered by Goldstein-Schlag \cite{goldstein}, and the uniformly hyperbolic sequence is explored which yields a simple proof and a better version of the Avalanche Principle. Many ingredients of the proofs in this paper are new, in particular the use of asymptotic stable and unstable directions, are of independent interest and have been applied to many other related problems.
\end{abstract}

\maketitle

\section{Introduction}
The one dimensional discrete ergodic Schr\"odinger equation models the motion of a quantum particle in a disordered medium, such as alloys and quasicrystals. The medium is described by a bounded sequence of real numbers $v : \Z \rightarrow\R$, called the {\em potential}. The quantum particle is described by its wavefunction $\psi\in\ell^2(\Z)$, which is also called \emph{the state}, and evolves according to the famous Schr\"odinger equation
\beq\label{equations}
i\frac{\partial \psi}{\partial t}=H_{v}\psi.
\eeq
Here $H_{v}:\ell^2(\Z)\rightarrow\ell^2(\Z)$ is the so-called Schr\"odinger operator, or Hamiltonian, and is given by
\beq\label{eq:operators2}
(H_v \psi)_n=\psi_{n+1}+\psi_{n-1}+v(n)\psi_n,\ \psi=(\psi_n)_{n\in\Z}\in\ell^2(\Z).
\eeq
We assume $\|v\|_\infty<M$ for some $M>0$. Throughout this paper, $M$ serves as an upper bound in various different settings. The study of the solutions of equation~\eqref{equations} is closely related to the spectral analysis of the operator~\eqref{eq:operators2}. Note the spectrum, $\sigma(H_v)$, of the operator $H_v$ is:
$$
\sigma(H_v)=\{E\in\C: H_v-E \mbox{ is not invertible}\}.
$$
It is a standard result that $\sigma(H_v)$ is a compact set contained in $[-M-2,M+2]$ since $H_v$ is a bounded and self-adjoint operator with operator norm $\|H_v\|\le 2+M$. Let $\rho(H_v)=\R\setminus\sigma(H_v)$ denotes the resolvent set of $H_v$ on the real line. 

A key part of the spectral analysis of the operators \eqref{eq:operators2} is to understand the asymptotic behaviors of solutions of the spectral equation
\beq\label{eq:SpecEq}
H_v\psi=E\psi,
\eeq
where $E\in\C$ is the \emph{energy parameter}. A direct computation shows that $\psi\in\C^\Z$ solves equation~\eqref{eq:SpecEq} if and only if 
\beq\label{eq:SchrCocycle1}
A^{(E-v)}(j)\binom{\psi_j}{\psi_{j-1}}=\binom{\psi_{j+1}}{\psi_{j}}, \ j\in\Z,
\eeq
where $A^{(E-v)}:\Z\to\SL(2,\R)$ is called the \emph{Schr\"odinger cocycle map} and is defined as
\beq\label{eq:SchrCocylce2}
A^{(E-v)}(j)=\begin{pmatrix}E-v(j) &-1\\ 1 &0\end{pmatrix}.
\eeq
The cocycle iteration is defined as 
\beq\label{eq:cocycleiteration}
A^{(E-v)}_n(j)=\begin{cases}A^{(E-v)}(j+n-1)\cdots A^{(E-v)}(j), & n\ge1,\\ I_2 , & n=0, \\ A^{(E-v)}(j+n)^{-1}\cdots A^{(E-v)}(j-1)^{-1}, & n\le-1,\end{cases}
\eeq 
where $I_2$ is the identity matrix. By \eqref{eq:SchrCocycle1}, $A^{(E-v)}_n(j)$ are the $n$-step transfer matrices of the equation~\eqref{eq:SpecEq} since
\beq\label{eq:n_step_transfer0}
A^{(E-v)}_n(j)\binom{\psi_j}{\psi_{j-1}}=\binom{\psi_{j+n}}{\psi_{j+n-1}}\mbox{ for all } j, n\in\Z.
\eeq
Through this relation, the spectral analysis of the operator~\eqref{eq:operators2} may then be turned into the study of the dynamics of the cocycle iterations \eqref{eq:cocycleiteration}. 

This interplay of different areas, mathematical physics, spectral theory, and dynamical systems, has been a field of very active study since the late 1970’s. Moreover, it has made striking progress in the past 20 years since seminal works of e.g. Jitomirskaya \cite{jitomirskaya}, Bourgain-Goldstein \cite{bourgaingoldstein}, Goldstein-Schlag \cite{goldstein}, and Avila \cite{avila2, avila3}. For more information, we refer the readers to the book \cite{bourgain0} by Bourgain and recent surveys by Damanik \cite{damanik} and Jitomirskaya-Marx \cite{jitomirskayamarx}. 

The goal of this paper is to focus on a key notion, \emph{uniform hyperbolicity}, in dynamical systems and its relations to the spectral analysis of the operators~\eqref{eq:operators2}. Uniform hyperbolicity may be taken as a starting point to understand the dynamics behind the one-dimensional discrete Schr\"odinger operators.

The structure of the remaining part of this paper is as follows. In Section~\ref{ss:uhsequence}, we discuss uniformly hyperbolic $\SL(2,\R)$-sequences, uniformly hyperbolic $\SL(2,\R)$ cocycles defined on base dynamics, and the relation between them. Then we state our main theorems concerning equivalent conditions of uniform hyperbolicity. In Section~\ref{ss:johnson}, we introduce the Johnson's theorem for sequence potentials. Then we will introduce the relation between sequence potentials and dynamically defined potentials and state the more well-known version of the Johnson's theorem for dynamically defined potentials. In Section~\ref{ss:APNew}, we give our version of the Avalanche Principle, discuss its relation with uniformly hyperbolic sequence, and provide some historic remarks regarding its generalizations. 

In Section~\ref{s:uhsequence}, we prove the results stated in Section~\ref{ss:uhsequence}. It contains another description of uniform hyperbolicity. In Section~\ref{s:johnson}, we prove the results stated in Section~\ref{ss:johnson}.  Finally, in Section~\ref{s:AP}, we prove the theorem stated in Section~\ref{ss:APNew}.

\subsection{Uniformly Hyperbolic $\SL(2,\R)$-Sequences}\label{ss:uhsequence}
Consider a map $A:\Z\rightarrow\mathrm{SL}(2,\R)$ with $\|A(j)\|\le M$ for all $j\in\Z$. We again define $A_n(j)$ be as in \eqref{eq:cocycleiteration}. Let $B\cdot \t$ denotes the induced transformation of $B\in\mathrm{SL}(2,\R)$ acting on projective space $\R\PP^1=\R/(\pi\Z)\ni \t$, i.e. a line passing through the origin is identified with the angle between itself and the positive half part of a horizontal line. By $\vec v\in\t$, we mean a vector $\vec v$ in the line with direction $\t$. In particular, we let $\vec\t$ denotes an unit vector in the line of direction $\t$.
Throughout this paper, $C,\ c$ will be universal constants, where $C$ is large and $c$ is small, and $\|\cdot\|_\infty$ always denotes the usual supremum norm in various scenarios. We first give the following definition. 
\begin{defi}\label{d.uhsequence}
	We say that $A$ is \emph{uniformly hyperbolic} $(\CU\CH)$ if there are two maps 
	$$
	u,\ s:\Z\rightarrow\R\PP^1
	$$ 
	such that
	\begin{enumerate}
		\item $u,s$ are $A$--invariant in the sense that for all $j\in\Z$, it holds that
		$$
		A(j)\cdot u(j)=u(j+1) \mbox{ and } A(j)\cdot s(j)=s(j+1).
		$$
		\item there exists $C>0,\lambda>1$ such that $\|A_{-n}(j)\vec v\|,\|A_n(j)\vec w\|\leq
		C\lambda^{-n}$ for all $n\geq1$, all $j\in\Z$, and all unit vectors $\vec v\in
		u(j),\vec w\in s(j)$.
	\end{enumerate}
	Here $u$ is called the unstable direction and $s$ the stable direction of $A$.
\end{defi}
\begin{remark}\label{r.uneqs}
	In many literatures, it is explicitly stated in the definition of uniform hyperbolicity that $u(j)\neq s(j)$ for all $j\in\Z$, or equivalently $\R^2=s(j)\oplus u(j)$ where one may think of $\t\in\R\PP^1$ to be an one-dimensional subspace of $\R^2$. We leave it implicit as $u(j)\neq s(j)$ for all $j\in\Z$ is an immediate consequence of the conditions in Definition~\ref{d.uhsequence}. Indeed, if $u(j)=s(j)$ for some $j$, then by $A$-invariance $s(j)=u(j)$ for all $j\in\Z$. Then for an unit vector $\vec s(j)\in s(j)$, it holds for all $n\ge 1$ that 
	 $$
	 \|A_n(j)\vec s(j)\|<C\l^{-n}.
	 $$
	 And for an unit vector $\vec u(j+n)\in u(j+n)$, it holds for all $n\ge 1$ that
	 $$
	  \|A_{-n}(j+n)\vec u(j+n)\|<C\l^{-n}.
	 $$
	 By $A$-invariance, it must hold that $A_{-n}(j+n)\vec u(j+n)\in u(j)=s(j)$. Hence 
	 $$
	 \vec u(j):=\frac{A_{-n}(j+n)\vec u(j+n)}{\|A_{-n}(j+n)\vec u(j+n)\|}=\pm \vec s(j).
	 $$
	 So for all $n\ge 1$, we obtain
	 $$
	 \|A_n(j)\vec u(j)\|=\frac{\|\vec u(j+n)\|}{\|A_{-n}(j+n)\vec u(j+n)\|}> c\l^{n},
	 $$
which in turn implies for all $n\ge 1$ that
$$
c\l^{n}<C\l^{-n}.
$$
This is clearly not possible. Note for the argument above to work, we do not need the condition $\|A\|_\infty<M$. On the other hand, under the condtion $\|A\|_\infty<M$, Lemma~\ref{l.distanceus} says that Definition~\ref{d.uhsequence} actually implies that $|u(j)-s(j)|>\gamma>0$ for all $j\in\Z$, where the distance is in $\R\PP^1$. Similarly, this remark applies to Definition~\ref{d.uhcocycle}. 
	\end{remark}
From now on, $A\in\CU\CH$ means $A$ is uniformly hyperbolic. We have the following equivalent condition for $\CU\CH$ sequence, which is called the condition of \emph{uniformly exponential growth}.
\begin{theorem}\label{t.uiformeg}
	$A\in\CU\CH$ if and only if there exists $c>0,\lambda>1$ such that $A$ satisfies the following uniform exponential growth condition
	\beq\label{eq:ueg} 
	\|A_n(j)\|\ge c\lambda^{n}\ \mbox{for all }n\in\Z_+\mbox{ and all }j\in\Z.
	\eeq
\end{theorem}

Theorem~\ref{t.uiformeg} first appeared as the version for cocycles defined on certain base dynamical systems, see e.g. Yoccoz \cite[Proposition 2]{yoccoz} or Viana~\cite[Proposition 2.1]{viana}. We also formulate the dynamical version as Corollary~\ref{c:uniformeg}, the proof of which follows from exactly the same argument of the proof of Theorem~\ref{t.uiformeg}. 

Consider a set $\Omega$, a bijection map $T:\Omega\rightarrow\Omega$, and a map $A:\Omega\rightarrow \mathrm{SL}(2,\R)$ with $\|A\|_\infty<M$. Then we may define a dynamical system
\beq\label{eq:cocycle0}
(T,A):\Omega\times\R^2\rightarrow\Omega\times\R^2\ (T,A)(\omega,\vec v)=(T(\omega),A(\omega)\vec v).
\eeq
Let $(T^n,A_n)=(T,A)^n$ denotes the iteration of the map. Then similar to \eqref{eq:cocycleiteration}, we have
\beq\label{eq:cocycleiteration1}
A_n(\omega)=\begin{cases}A(T^{n-1}\omega)\cdots A(\omega), & n\ge1,\\ I_2 , & n=0, \\  [A_{-n}(T^{n}\omega)]^{-1}& n\le-1,\end{cases}
\eeq 

Here $A$ is called a cocycle map. For simplicity, $(T,A)$ may also denote the induced projective dynamics of $(T,A)$ on $\Omega\times\R\PP^1$.

\begin{defi}\label{d.uhcocycle}
	$(T,A)$ is said to be uniformly hyperbolic if there exist two maps $u,\ s:\Omega\rightarrow\R\PP^1$ such that:
	\begin{enumerate}
		\item $u,s$ are $(T,A)$--invariant which means that for all $\omega\in\Omega$,
		$$
		A(\omega)\cdot u(\omega)=u[T(\omega)]\mbox{ and } A(\omega)\cdot s(\omega)=s[T(\omega)];
		$$
		\item there exists $C>0,\lambda>1$ such that $\|A_{-n}(\omega)\vec v\|,\|A_n(\omega)\vec w\|\leq
		C\lambda^{-n}$ for all $n\geq1$, all $\omega\in\Omega$, and all unit vectors $\vec v\in
		u(\omega),\vec w\in s(\omega)$
	\end{enumerate}
	Here $u$ is called the unstable direction and $s$ the stable direction of $(T,A)$.
\end{defi}
Note in Definition~\ref{d.uhcocycle}, no topological structure or $\sigma$-algebra structure is assumed for $\Omega$. Then we have the following corollary of the proof of Theorem~\ref{t.uiformeg}, see e.g. Remark~\ref{r:uniformeg}.

\begin{corollary}\label{c:uniformeg}
	Let $(\Omega,T,A)$ be as in Definition~\ref{d.uhcocycle}. Assume in addition $\Omega$ is a compact topological space, $T$ a homeomorphism and $A$ continuous. Then $(T,A)\in\CU\CH$ if and only if there exists $c>0,\lambda>1$ such that $(T,A)$ satisfies the following uniform exponential growth condition:
	$$
	\|A_n(\omega)\|\ge c\lambda^{n} \mbox{ for all }n\in\Z_+\mbox{ and all }\omega\in\Omega.
	$$
	Moreover, the corresponding unstable and stable directions are continuous on $\Omega$.
\end{corollary}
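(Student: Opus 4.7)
The plan is to mimic the proof of Theorem~\ref{t.uiformeg} verbatim, with $\Omega$ itself playing the role that $\mathrm{Hull}(A)$ played in the proof of Lemma~\ref{l.distanceus}. The forward direction is immediate from Definition~\ref{d.uhcocycle}, so I focus on the reverse direction under the assumption of uniform exponential growth. Define $u_n(\omega)=s[A_{-n}(\omega)]$ and $s_n(\omega)=s[A_n(\omega)]$. The Cauchy estimate
$$
|s_n(\omega)-s_{n+1}(\omega)|\le C\lambda^{-2n}
$$
from Lemma~\ref{l.existenceus} goes through unchanged, since it only used $\|A(\omega)\|\le M$ and $\|A_n(\omega)\|\ge c\lambda^n$, both of which now hold uniformly on $\Omega$. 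Hence $s_n(\omega)\to s(\omega)$ and $u_n(\omega)\to u(\omega)$ uniformly in $\omega$, and $(T,A)$-invariance of $u,s$ follows from the same limit argument as before.

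Next I would establish continuity of $u$ and $s$. Since $A$ is continuous and $T$ is a homeomorphism, each map $A_n,A_{-n}:\Omega\to\mathrm{SL}(2,\R)$ is continuous. For all $n$ large enough that $\|A_n(\omega)\|>1$ uniformly on $\Omega$, the most contracting direction $B\mapsto s(B)$ is a continuous function of $B$, so $u_n$ and $s_n$ are continuous on $\Omega$; this is exactly the content of \cite[Lemma~10]{zhang}. Uniform convergence then upgrades continuity to the limits $u$ and $s$.

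With continuous $u,s$ on the compact space $\Omega$ in hand, the analog of Lemma~\ref{l.distanceus} becomes direct. Transversality $u(\omega)\ne s(\omega)$ is established pointwise via maximality of $\lambda$ exactly as in the proof of Theorem~\ref{t.uiformeg}, and then compactness of $\Omega$ combined with continuity upgrades this to a uniform lower bound $|u(\omega)-s(\omega)|\ge\gamma>0$. Finally, the cone-field argument at the end of the proof of Theorem~\ref{t.uiformeg} applies verbatim: it was already pointwise in the base variable and only needed uniform transversality, uniform convergence of $u_N,s_N$, and the uniform lower bound $\|A_N(\omega)\|\ge c\lambda^N$, all of which we now have on $\Omega$. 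This yields the required exponential contraction $\|A_{-n}(\omega)\hat u(\omega)\|,\|A_n(\omega)\hat s(\omega)\|\le C\lambda_0^{-n}$.

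The only genuine subtlety, and thus the main thing to check carefully, is the continuity step: one must verify that the most-contracting-direction map $B\mapsto s(B)$ is continuous on $\{B\in\mathrm{SL}(2,\R):\|B\|>1\}$, so that uniform convergence of $s_n$ to $s$ yields continuity of $s$ on $\Omega$. Granting this (via \cite[Lemma~10]{zhang}), every other step is a line-by-line transcription of the proof of Theorem~\ref{t.uiformeg}, with pointwise-in-$k$ statements replaced by pointwise-in-$\omega$ statements and the hull construction rendered unnecessary because $\Omega$ is already compact.
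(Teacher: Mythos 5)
Your proposal is correct and is precisely the argument the paper intends but leaves implicit: the corollary is stated as a ``corollary of the proof'' of Theorem~\ref{t.uiformeg}, and your transcription, with the given compact $\Omega$ playing the role that $\mathrm{Hull}(A)$ played in Lemma~\ref{l.distanceus} so that the hull construction can simply be skipped, is the natural way to carry it out. One point deserves to be made more explicit than ``exactly as in the proof of Theorem~\ref{t.uiformeg}'': in Lemma~\ref{l.existenceus} the pointwise transversality $u\ne s$ is first obtained at a single point $k_0$ via maximality of $\lambda$ and then propagated to all of $\Z$ by $A$-invariance, which works because $\Z$ is a single $T$-orbit. On a general compact $\Omega$ invariance only propagates transversality along each individual orbit, so to get $u(\omega)\ne s(\omega)$ for every $\omega$ one should either apply Lemma~\ref{l.existenceus} directly to the sequence $n\mapsto A(T^n\omega)$ for each fixed $\omega$ (observing that $u(\omega),s(\omega)$ agree with the limits $u^{\omega}(0),s^{\omega}(0)$ obtained for that sequence, since they are the same limits of most-contracting directions), or take the maximality witnesses inside the orbit closure of the given $\omega$. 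With that clarification, the remaining steps --- continuity of $u_n,s_n$ via continuity of $B\mapsto s(B)$ away from $\|B\|=1$, uniform convergence preserving continuity, compactness of $\Omega$ producing the uniform gap $\gamma$, and the cone-field argument --- carry over line by line, exactly as you describe.
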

\begin{remark}\label{r:nuh}
	There is another notion of hyperbolicity that is closely related to, but different from, $\CU\CH$ which is called \emph{non-uniform hyperbolicity} ($\CN\CU\CH$). To define it, we have to introduce a $T-invariant$ probability measure $\mu$ on $\Omega$, i.e. a probability $\mu$ such that $\mu(T^{-1}(S))=\mu (S)$ for any $\mu$-measurable set $S\subset\Omega$. Then we introduce the dynamical object, the Lyapunov exponent, which is defined as
	$$
	L(T,A)=\lim_{n\to\infty}\frac1n\int_{\Omega}\log\|A_n(\omega)\|d\mu=\inf_{n\ge 1}\frac1n\int_{\Omega}\log\|A_n(\omega)\|d\mu\ge 0.
	$$
	The limit exists and is equal to the infimum since $\{\int_{\Omega}\log\|A_n(\omega)\|d\mu\}_{n\ge1}$ is subadditive. It is clear that if $(T,A)\in\CU\CH$, then $L(T,A)>\log\l>0$. On the other hand, if $L(T,A)>0$ and $(T,A)\notin\CU\CH$, then we say that $(T,A)$ is \emph{non-uniformly hyperbolic} and is denoted as $(T,A)\in\CN\CU\CA$. There is well-developed theory in dynamical systems called the \emph{Oseledec's Multiplicity Ergodic Theorem} which gurantees the existence of a pair of measurable stable and unstable direction $s$ and $u:\Omega\to\R\PP^1$ for any such system with positive Lyapunov exponent. See e.g. \cite{barreirapesin, viana}. One may also deduce the existence of such directions by following the proof of Lemma~\ref{l.existenceus}. The main difference between $\CU\CH$ and $\CN\CU\CH$ is that for $\CN\CU\CH$, the stable and unstable directions are only defined $\mu$ almost everywhere, are merely measurable, and their difference must tend to $0$ along some orbits.
	\end{remark}

One may go from a sequence to a dynamical system by the following process. First we need the notion of hull of a sequence in a space of full shift. Let $\CA^\Z$ be the space of full shift generated by a set of alphbets $\CA$. Suppose $\CA$ is a compact topological space and $\CA^\Z$ be equipped with the product topology. Hence $\CA^\Z$ is compact topologic space as well. Moreover, if $\CA$ is a metric space, then so is $\CA^\Z$. Let $T:\Omega\to\Omega$ be the operator of left shift, i.e.
 $$
 (T\omega)_n=\omega_{n+1} \mbox{ for } \omega=(\omega_n)\in\CA^\Z.
 $$
\begin{defi}
For each $\omega\in\CA^\Z$, the hull of $\omega$ is defined as $\overline {\{T^n(\omega)\}_{n\in\Z}}$, i.e. the closure of the $T$-orbit of $\omega$ under the product topology. Let $\mathrm{Hull}(\omega)$ denotes the hull of $\omega$, which itself is clearly a compact topological space that is invariant under $T$.
\end{defi}
Now we take $\CA=B_M[\mathrm{SL}(2,\R)]$ where $B_M$ denotes the ball in $\mathrm{SL}(2,\R)$ with norm less than or equal to $M$. Thus the map $A:\Z\to B_M[\mathrm{SL}(2,\R)]$ is an element in $\CA^\Z$. So we may let $\Omega=\overline{\{T^n(A)\}_{n\in\Z}}=\mathrm{Hull}(A)$. Clearly, $T:\Omega\rightarrow\Omega$ is a homeomorphism. Let $F:\Omega\rightarrow\mathrm{SL}(2,\R)$ be the evaluation map at the $0$-position, i.e. $F(\omega)=\omega(0)$. We may then consider the cocycle $(T,F):\Omega\times\R^2\to\Omega\times\R^2$ as in \eqref{eq:cocycle0}. Let $(T^n,F_n)=(T,F)^n$ with $F_n(\omega)=F(\omega_{n-1})\cdots F(\omega_0)$ be the cocycle iteration as in \eqref{eq:cocycleiteration1}. Then the following proposition is straightforward:

\begin{prop}\label{c:uniformeg2}
	  Let $A$ and $(\Omega, T, F)$ be as above. Then 
	  $$
	  \|A_n(j)\|\ge c\l^{n}  \mbox{ for all } n \in\Z_+ \mbox{ and all } j\in\Z
	  $$ 
	  if and only if 
	  $$
	  \|F_n(\omega)\|\ge c\l^{n} \mbox{ for all } n \in\Z_+ \mbox{ and all } \omega\in\Omega.
	  $$
	  In particular, by Therorem~\ref{t.uiformeg}, the sequence $A$ is uniformly hyperbolic if and only if $(T,F)$ is uniformly hyperbolic.
\end{prop}
\begin{proof}
We only need to consider the \emph{only if} part since the \emph{if} part is obvious via the relation $F_n(T^kA)=A_n(k)$. Fix any $\omega\in\Omega$ and any $n\in\Z$. Since $\{T^k(A), k\in\Z\}$ is dense in $\Omega$, for each $\varepsilon>0$, we can find a $j$ that $T^j(A)$ is so close to $\omega$ that the following holds:
$$
\|F_n(\omega)\|>(1-\varepsilon) c\l^{n}.
$$ 
Since the above inequality holds for all $\omega\in\Omega$,  all $n\in\Z$ and all $\omega>0$, we then get 
 $$
\|F_n(\omega)\|\ge c\l^{n} \mbox{ for all } n \in\Z_+ \mbox{ and all } \omega\in\Omega,
$$
concluding the proof.
	\end{proof}
It turns  out that for the proof of Theorem~\ref{t.uiformeg}, one naturally needs to move from a sequence to its Hull. Next we have another equivalent description of uniform hyperbolicity which is used for the proofs in Section~\ref{s:johnson}. Let $\Omega$ be compact metric space\footnote{Unlike Corollary~\ref{c:uniformeg}, here we need $\Omega$ to be a compact metric space as we will need sequential compactness in its proof.}, $T:\Omega\to\Omega$ a homeomorphism, and $A:\Omega\to\mathrm{SL}(2,\R)$ is continuous. 
\begin{theorem}\label{l.nontrivialbo}
	Let $\Omega, T, A$ be as above mentioned. In particular, $\Omega$ is a compact metric space. Then $(T,A)\notin\CU\CH$ if and only if there is a $\omega\in\Omega$ and an unit vector $\vec v\in\R^2$ such that
	\beq\label{eq:NonTrivailBounded}
	\|A_n(\omega)\vec v\|\le1,\mbox{ for all } n\in\Z.
	\eeq
\end{theorem}
Theorem~\ref{l.nontrivialbo} clearly implies the following equivalent description of uniform hyperbolic sequence.
\begin{corollary}\label{c.nontrivialbo}
	The sequence $A:\Z\to\SL(2,\R)$ is not uniformly hyperbolic if and only there is an unit vector $\vec v\in\R^2$ so that the following holds true. For each $\e>0$ and each $n\in\Z$, there is a $j_n\in\Z$ so that
	$$
	\|A_n(j_n)\vec v\|<1+\e.
	$$
	\end{corollary}
\begin{proof}
	Let $\Omega=\mathrm{Hull}(A)$. Let $(T,F)$ be the dynamics on $\Omega\times\R^2$ as before Proposition~\ref{c:uniformeg2}. Then by Proposition~\ref{c:uniformeg2}, $A\notin\CU\CH$ if and only if $(T,F)\notin\CU\CH$. By Theorem~\ref{l.nontrivialbo}, $A\notin\CU\CH$ if and only if there is a $\omega\in\Omega$, an unit vector $\vec v\in\R^2$ such that
	$$
	\|F_n(\omega)\vec v\|\le1,\mbox{ for all } n\in\Z.
	$$
	Since $\{A(j)\}_{j\in\Z}$ is dense in $\Omega$ and $F_n(T^kA)=A_n(k)$, by a standard continuity argument, we then obtain the desired conclusion.
	\end{proof}
One can basically find Theorem~\ref{l.nontrivialbo} in \cite{sackersell}, see for example \cite[Theorem 1.7]{johnson}. The author learned the proof of Theorem~\ref{l.nontrivialbo} presented in Section~\ref{s:uhsequence} from a course given by Artur Avila. It is perhaps relatively simple. See Remark~\ref{r:johnson} for some historic remarks regarding Theorem~\ref{l.nontrivialbo}.

Another equivalent condition for uniform hyperbolicty is the existence of an invariant cone field, see e.g. \cite[Section~2.1]{avila} for some detailed description. Though we do not involve the equivalence between it and $\CU\CH$ directly anywhere, in Section~\ref{ss:uhsequence2}, we do make use of a version of the invariant cone field. See Lemma~\ref{l:inv_cone} for more information.

\subsection{Resolvent Set and Uniform Hyperbolicity}\label{ss:johnson}

 One of the important relations between dynamics of the Schr\"odinger cocycle and spectral theory of the Schr\"odinger cocyle is the following Johnson's Theorem.
	\begin{theorem}\label{t.uniformhers}
		$\sigma(H_v)=\{E\in\R:A^{(E-v)}\notin\CU\CH\}.$
	\end{theorem}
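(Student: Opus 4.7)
The plan is to prove the set equality by establishing both inclusions, using Theorem~\ref{t.uiformeg} to trade $\CU\CH$ for uniform exponential norm growth and Lemma~\ref{l.finitesae} to trade membership in $\sigma(H_v)$ for the existence of finitely supported approximate eigenvectors.

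For the inclusion $\{E:A^{(E-v)}\in\CU\CH\}\subseteq\rho(H_v)$ I would construct the Green's function of $H_v-E$ directly from the invariant sections. Definition~\ref{d.uhsequence} and Lemma~\ref{l.distanceus} furnish invariant maps $u,s:\Z\to\R\PP^1$ with uniform transversality $|u(n)-s(n)|\geq\gamma>0$. Pick real sequences $\phi^s,\phi^u$ solving $H_v\phi=E\phi$ whose cocycle traces $(\phi^s(n),\phi^s(n-1))$ and $(\phi^u(n),\phi^u(n-1))$ lie in $s(n)$ and $u(n)$ respectively, so that $\CU\CH$ forces exponential decay of $\phi^s$ at $+\infty$ and $\phi^u$ at $-\infty$. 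Since $\det A^{(E-v)}(n)=1$, the Wronskian $W=\phi^s(n)\phi^u(n-1)-\phi^s(n-1)\phi^u(n)$ is independent of $n$, and transversality gives $|W|\geq\sin\gamma\cdot\|\Phi^s(n)\|\|\Phi^u(n)\|>0$ uniformly, where $\Phi^\star(n)=(\phi^\star(n),\phi^\star(n-1))^T$. A direct three-term check shows that
\[
G(n,m)=-\frac{1}{W}\phi^u(n\wedge m)\phi^s(n\vee m)
\]
satisfies $(H_v-E)G(\cdot,m)=\delta_m$, and combining the $\CU\CH$ contraction estimates with the Wronskian bound yields $|G(n,m)|\leq C\lambda^{-|n-m|}$ uniformly in $n,m$. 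Schur's test on uniform row and column sums promotes $G$ to a bounded operator on $\ell^2(\Z)$ inverting $H_v-E$, so $E\in\rho(H_v)$.

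For the contrapositive of the reverse inclusion, assume $A^{(E-v)}\notin\CU\CH$. By Theorem~\ref{t.uiformeg}, for every $\varepsilon>0$ I can pick $k\in\Z$ and arbitrarily large $n\in\Z^+$ with $\|A^{(E-v)}_n(k)\|\leq e^{\varepsilon n}$. Let $\hat v$ be the most-contracted unit direction of $A^{(E-v)}_n(k)$ and let $\phi$ be the solution of $H_v\phi=E\phi$ with $(\phi(k),\phi(k-1))^T=\hat v$. Set $\Phi(j)=(\phi(j),\phi(j-1))^T$ and $u=\phi\cdot\chi_{[k,k+n-1]}$. A boundary computation yields the clean identity
\[
\|(H_v-E)u\|^2=\|\Phi(k)\|^2+\|\Phi(k+n)\|^2=1+\|A^{(E-v)}_n(k)\|^{-2}\leq 2,
\]
so Lemma~\ref{l.finitesae} reduces the problem to showing $\|u\|^2\to\infty$ along the sequence, equivalently $\sum_{m=0}^{n-1}\|A^{(E-v)}_m(k)\hat v\|^2\to\infty$.

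The main obstacle is precisely this intermediate $L^2$ lower bound: a priori the path $m\mapsto\|A^{(E-v)}_m(k)\hat v\|$ could dip to exponentially small values in the interior of $[0,n]$, leaving $\|u\|$ bounded. I would resolve this by passing to the hull $\Omega=\mathrm{Hull}(A^{(E-v)})$ exactly as in the proof of Lemma~\ref{l.distanceus}. Failure of uniform exponential growth persists on the compact base $\Omega$, so a compactness and diagonal extraction produces $\omega^*\in\Omega$ and a unit direction $v^*$ whose $F_n(\omega^*)$--orbit is uniformly bounded above and below. Unshifting via the density of $\{T^k(A^{(E-v)})\}$ in $\Omega$ converts this into a sequence $(k_l,n_l,\hat v_l)$ along which $\|A^{(E-v)}_m(k_l)\hat v_l\|$ stays in a fixed compact subset of $(0,\infty)$ for $m\in[0,n_l]$. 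The cutoff construction then delivers $\|u_l\|^2\gtrsim n_l$ with $\|(H_v-E)u_l\|^2\leq 2$, and Lemma~\ref{l.finitesae} finally yields $E\in\sigma(H_v)$.
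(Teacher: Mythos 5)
Your first inclusion is the paper's own Green's function argument, correct up to a sign convention in the definition of $G$ (with the Wronskian normalization you wrote, $[(H_v-E)G(\cdot,m)]_m$ works out to $-1$, not $+1$; either flip the sign in $G$ or in $W$).

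The second inclusion follows the same overall route the paper takes through its Lemma~\ref{l.nontrivialbo}: pass to the hull, extract a uniformly bounded solution by compactness, cut it off and transfer back to $v$ via shifting finitely supported approximate eigenvectors. However, there is a genuine gap in your execution. The compactness argument on the hull produces $(\omega^*,v^*)$ with $\|F_n(\omega^*)v^*\|\le 1$ for all $n\in\Z$, i.e.\ an orbit bounded \emph{above} only; your claim that it is also \emph{bounded below}, and hence that the unshifted orbits live in ``a fixed compact subset of $(0,\infty)$'' and $\|u_l\|^2\gtrsim n_l$, is false in general. Nothing prevents the extracted solution from decaying to zero in both directions---for instance, if $E$ happens to be an $\ell^2$-eigenvalue of $H_{\omega^*}$ (as occurs for localized operators), the natural output of the compactness construction is exactly such a decaying solution, and then $\|u_l\|^2$ stays bounded rather than growing linearly. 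The paper avoids this by a case split you would need to add: if the bounded solution is in $\ell^2$, then $E$ is an eigenvalue of $H_{\omega^*}$ and one is done immediately; if not, the truncation norms $\|\hat u^L\|$ still tend to infinity (just not necessarily like $L$), while the boundary error stays $O(1)$, so the normalized truncations are approximate eigenvectors. Either way $E\in\sigma(H_{\omega^*})$, and the shift-back to $\sigma(H_v)$ then proceeds as you describe via Lemma~\ref{l.finitesae}. With that dichotomy inserted in place of the unjustified lower bound, your argument closes.
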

\begin{remark}\label{r:johnson}
	Johnson's Theorem was first presented in Russell Johnson's paper \cite{johnson}. It was originally stated in the setting of dynamically defined Schr\"odinger operators (See e.g. the Theorem~\ref{t.uniformhers2} below). Johnson's work used and was partially inspired by series of works of Sacker-Sell, see e.g. \cite{sackersell}. In particular, Sacker-Sell discovered the so-called Sacker-Sell orbit which is essentially the $(A_n(\omega)\vec v)_{n\in\Z}$ as in \eqref{eq:NonTrivailBounded}. Note also that in Sacker-Sell and Johnson's papers the notion of uniform hyperbolicity are appeared as the notion of \emph{exponential dicotomy}. We also wish to point out that separately in the community of dynamical systems, diffeomorphisms on manifolds which do not admit Sacker-Sell type of orbits on the tagent bundle are called quasi-Anosov, which have been heavily studied as well. For diffeomorphisms on manifold, the equivalent notion of uniformly hyperbolic system is called the Anosov diffeomphism. See e.g. the paper by Franks-Robinson \cite{franks} for some further information.
	\end{remark}

It turns out that like the uniform hyperbolic sequences, even though one starts with the operator \eqref{eq:operators2}, one naturally ends up studying a family of operators. More precisely, in this case, we let $\CA=[-M,M]$ with the usual topoloty. Hence, $\CA^\Z$ is compact under product topology. Let $\Omega=\mathrm{Hull}(\CA)=\overline{\{T^n(v)\}_{n\in\Z}}$ which is a compact topological space that is invariant under the left shift operator $T$. We may then define a funtion $f:\Omega\to\R$ as $f(\omega)=\omega_0$ and consider the family of the operators
\beq\label{eq:operator}
(H_\omega \psi)_n=\psi_{n+1}+\psi_{n-1}+f(T^n\omega)\psi_n,\ \omega\in\Omega.
\eeq
In particular, $H_v$ is embedded into the dynamically defined family of operators $\{H_\omega\}_{\omega\in\Omega}$. Note that $T$ is topological transitive since $\{T^nv, n\in\Z\}$ is dense in $\Omega$. For spectral analysis of the family of operators $\{H_\omega\}_{\omega\in\Omega}$, $(\Omega, T)$ usually comes equipped with a $T$-ergodic probability measure $\mu$. Here ergodic measure means that $\mu$ is a $T$-invariant measure with the additional property:
\begin{center}
$\mu\left[(A\setminus T^{-1}A)\bigcup (T^{-1}A\setminus A)\right]=0\Rightarrow\mu(A)=0$ or $1$. 
\end{center}
Then the family of operators $\{H_\omega\}_{\omega\in\Omega}$ is called ergodic Schr\"odinger operators with ergodic base dynamics $(\Omega,T, \mu)$. To study many of the spectral properties of the operator~\eqref{eq:operators2}, such as Anderson Localization phenomenon, one has to consider the family operators~\eqref{eq:operator} for $\mu$-a.e. $\omega$ with some suitable choice of $\mu$. See Remark~\ref{r:le} for more information.

In fact, one may just start with ergodic system $(\Omega,T,\mu)$ where $\Omega$ is a compact metric space, $T:\Omega\to\Omega$ a homeomorphism and $\mu$ is a probability on $\Omega$ that is $T$-ergodic. Let $f:\Omega\to\R$ be a continuous function. Then we can define $H_\omega$ just as in \eqref{eq:operator}. Similarly, the transfer matrices may be generated as follows. First, we define a family maps $A^{(E-f)}:\Omega\to\SL(2,\R)$ as
\beq\label{eq:schrodinger_cocycle_map}
A^{(E-f)}(\omega)=\begin{pmatrix}E-f(\omega)&-1\\ 1&0\end{pmatrix},\ E\in\R.
\eeq
which is called the \emph{Schr\"odinger cocycle map}. Then similar to \eqref{eq:cocycle0}, we have a family of dynamical systems as
\beq\label{eq:schrodinger_cocycle}
(T,A^{(E-f)}):\Omega\times\R^2\rightarrow\Omega\times\R^2,\ (T,A^{(E-f)})(\omega,\vec v)=(T\omega,A^{(E-f)}(\omega)\vec v)
\eeq
which is called the \emph{Schr\"odinger cocycle}. Let $(T,A^{(E-f)})^n=(T^n,A^{(E-f)}_n)$. Similar to \eqref{eq:n_step_transfer0}, $A^{(E-f)}_n(\omega)$ is the $n$-step transfer matrix of the equation $H_\omega\psi=E\psi$. Then we have the following theorem which is the standard version of the Johnson's Theorem \cite{johnson}:

\begin{theorem}\label{t.uniformhers2}
	Let $(\Omega,T, f)$ be as above mentioned. Assume in addition that $(\Omega,T)$ is topological transitive. Let $\omega_0$ be that $\overline{\mathrm{Orb}(\omega_0)}=\Omega$ and let $\Sigma=\sigma(H_{\omega_0})$. Then it holds for all $\omega\in\Omega$ that $\sigma(H_\omega)\subset\Sigma$. Moreover, it holds that
	$$
	\Sigma=\{E:(T,A^{(E-f)})\mbox{ is not uniformly hyperbolic}\}.
	$$
\end{theorem}

\begin{remark}\label{r:le}
	Let $L(E)=L(T,A^{(E-f)})$ be the Lyapunov exponent for each $E\in\R$. By Theorem~\ref{t.uniformhers2} and Remark~\ref{r:nuh}, for those $E\in\Sigma$, either $L(E)=0$ or $(T,A^{(E-f)})\in\CN\CU\CH$. For the spectral analysis of ergodic Schr\"odinger operators, a key part is to study the Lyapuonv exponent $L(E)$. For instance, Kotani Theory \cite{kotani} basically identifies for $\mu-$a.e. $\omega$ the absolutely spectrum $H_\omega$ with the set of energies where $L(E)=0$. Moreover, the approach developed by Bourgain-Goldstein \cite{bourgaingoldstein} basically shows that uniform positivity and some version of uniform large deviation estimates of the Lyapunov exponent $L(E)$ (both uniformities are in $E$) are strong indications of Anderson Localization for $H_\omega$ for many $\omega$'s. Also, a key part of the global theory established by Avila \cite{avila2} for one-frequency quasiperiodic operators are some deep analysis of the Lypunov exponent. In particular, Avila managed to characterize $\CU\CH$ and $\CN\CU\CH$ via the Lyapunov exponent in a surprising way for analytic one-frequency $\mathrm{SL}(2,\C)$ cocyles.
	
	\end{remark}

We wish to point out that Theorem~\ref{t.uniformhers} and ~\ref{t.uniformhers2} are closely related due to the relation between $v$ and $\mathrm{Hull}(v)$. However, Theorem~\ref{t.uniformhers} is the first version for sequence potentials. It thus may come more naturally. In fact, in Section~\ref{s:johnson}, we could deduce Theorem~\ref{t.uniformhers2} from Theorem~\ref{t.uniformhers} without much effort. Moreover, in Section~\ref{s:johnson}, we shall give two different proofs for the direction ``uniform hyperbolicity away from the spectrum''. The first one uses the results in Section~\ref{ss:uhsequence}. The other one is provided to the author by W. Schlag which only uses Combes-Thomas type of estimates and is actually much simpler.

Since the work of Johnson, there are many further developments. For instance, \cite{marx} showed a similar correspondence between the singular Jacobi operators and cocycles with dominated splitting, which is a generalized notion of $\CU\CH$ where matrices are allowed to be singular. Later, \cite{fillmanongzhang} did a similar version between the generalized extended CMV matrices and cocyles with dominated splitting. Those are crucial in establishing many spectral properties of the extended Harper's model as in \cite{avilajitomirskayamarx} and the unitary critical almost Mathieu operators as in \cite{fillmanongzhang}. See also \cite{gorodetskikleptsyn} for a recent version written in terms of the rotation number of the cocycle.

\subsection{Avalanche Principle and Uniform Hyperbolicity}\label{ss:APNew}
The following proposition was first discovered by Goldstein-Schlag \cite{goldstein} and is called \emph{the Avalanche Principle}. 
\begin{prop}\label{p:avlanche-principle}
	Let $A(1),\ldots, A(n)$ be a finite sequence in $\mathrm{SL}(2,\R)$ satisfying:
	\begin{align}\label{condition-AP}
	&\min_{1\le j\le n}\|A(j)\|\ge \l > n,\\ \label{condition-AP2}
	&\max_{1\le j<n}\left|\log \|A(j+1)\|+\log \|A(j)\|-\log\|A(j+1)A(j)\|\right|<\frac12\log\l.
	\end{align}
	Then
	\begin{equation} \label{avalanche-principle}
	\left|\log\|A_n(1)\|+\sum_{j=2}^{n-1}\log\|A(j)\|-\sum_{j=1}^{n-1}\log\|A(j+1)A(j)\|\right|\le C\frac{n}{\l}.
	\end{equation}
\end{prop}
\noindent See \cite[Proposition 2.2]{goldstein} for the original proof. Basically, the Avalanche Principle permits us good control on the norm of a product of $\SL(2,\R)$ matrices provided we have suitable estimates on consecutive pairwise products. Together with large deviation type of estimates for the associated Lyapuonv exponent, it is proved to be a powerful tool in establishing quantative continuity of the Lyapunov exponent and the integrated density of states.

A close comparision of the conditions \eqref{eq:largenorm}-\eqref{eq:largedistance} of Corollary~\ref{c:SeparationUStoUH} and the conditions~\eqref{condition-AP}-\eqref{condition-AP2}, one may find they are basically conditions~\eqref{eq:largenorm}-\eqref{eq:largedistance} for $k=1$. Thus the finite sequence $A(1),\ldots, A(n)$ is actually a finite piece of an infinitly hyperbolic sequence. A bit more precisely, if $\|A(j+1)A(j)\|$ is not too small compared with $\|A(j+1)\|\cdot\|A(j)\|$, then the most contracted direction $s(j+1)$ of $A(j+1)$ is not too close to the most contracted direction $u(j)$ of $(A(j))^{-1}$. 

Once one realizes this relation, then one can actually remove the restriction $\l>n$ in condition~\eqref{condition-AP} and obtain a dynamical proof. Roughly speaking, in case one has $\CU\CH$, as the norm of the cocycle satisfies the condition of uniform exponential growth \eqref{eq:ueg}, the difference between $n$-step asymptotic stable (resp. unstable) and $(n+k)$-step asymptotic stable (resp. unstable) directions (see \eqref{eq:AsympDirections} for their definition) won't accumulate as $k$ gets large, see \eqref{eq:exp_decay_error_s_n}. One may compare the proof of Lemma~\ref{l.existenceus} and Section~\ref{s:AP} for more information of this discussion. So we restate it as: 
\begin{theorem}\label{t:APNew}
	\label{l.avlanche-principle}
	Let $A(j),\ j\in\Z$ be an infinite sequence in $\mathrm{SL}(2,\R)$. Suppose there is a $\l>C$, largeness independent of $n$ below, so that for each $j$, it holds that:
	\begin{align}\label{condition-AP3}
	&\|A(j)\|\ge \l, \\ \label{condition-AP4}
	&\left|\log \|A(j+1)\|+\log \|A(j)\|-\log\|A(j+1)A(j)\|\right|\le \frac12\log\l.
	\end{align}
	Then the sequence $A(j),\ j\in\Z$ is uniformly hyperbolic and it holds for each $j\in\Z$ and each $n\in\Z_+$ that
	\begin{equation}\label{eq:AP}
	\left|\log\|A_n(j)\|+\sum_{k=1}^{n-2}\log\|A(j+k)\|-\sum_{k=0}^{n-2}\log\|A(j+k+1)A(j+k)\|\right|\le C\frac{n}{\l}.
	\end{equation}
	
\end{theorem}

There are numerous generalizations since the original work of Goldstein-Schlag, e.g. orders of the matrices have been generalized from $2$ to any $d\ge 2$, real valued matrices to complex valued ones, and the relaxation of the condition $\l>n$. In particular, in the setting of $\SL(2,\R)$-cocycles, as it was expressed by Duarte-Klein \cite{duarteklein}, Bourgain-Jitomirskaya \cite[Lemma 5]{bourgainjitomirskaya} have greatly relaxed the constraint $\l>n$, and later Bourgain \cite[Lemma 2.6]{bourgain} has completely removed it, at the cost of slightly weakening the conclusion of the AP. Finally,  Duarte-Klein's work \cite{duarteklein} may be considered as a one stop reference for the development of Avalanche Principle, where they obtained the most general version. In particular, in their version, $\l>n$ is removed without any cost. 

Our purposes of Theorem~\ref{t:APNew} are the following. First, no version above mentioned seemed to realize the relation between infinitely uniformly hyperbolic sequence of matrices and the Avalanche Principle. Second, though the version in \cite{duarteklein} is more general than ours (note however, they only dealt with finite sequences), the proof was quite involved and hence much longer than the one in this paper. 

The proof of Theorem~\ref{t:APNew} is the whole Section~\ref{s:AP}. Many of the tools have actually been developed in \cite{wangzhang} for different purpose concerning positivity and large deviation estimates of the Lyapunov exponent of some quasiperiodic operators. We include a full proof here as the tools are a special case of those in \cite{wangzhang} and hence the proof are much simpler. From the proof, it is not difficult to see that one may both improve the estimates and generalize the results in various ways. However, we wish to stay within the simplest nontrivial scenario to make the dynamics behind the Avalanche Principle clearest.

\section{Equivalent Descriptions of Uniform hyperbolicity} \label{s:uhsequence}

In this section, we prove Theorems~\ref{t.uiformeg} and \ref{l.nontrivialbo}. It is clear that Definition~\ref{d.uhsequence} implies the uniform exponential growth of $A$. So we only need to show that the converse is true. 

\subsection{Uniform Exponential Growth Implies Uniform Hyperbolicity}\label{ss:uhsequence2} \footnote{The author would like to thank W. Schlag for pointing out to him that there are some similar discussions in \cite[Section 2.2]{viana} with the present section. Corollary~\ref{c:cont_su} was added after the author read Viana's book.}

For $D\in\mathrm{SL}(2,\R)$, let $s(D)\in\R\PP^1$ denotes the most contracted direction of $D$. It is a relatively straightforward fact that as a map from $\mathrm{SL}(2,\R)$ to $\R\PP^1$, $s(\cdot)$ is $C^\infty$ away from $\mathrm{SO}(2,\R)$ (see e.g. \cite[Lemma 10]{zhang}). Let $\vec s(D)\in s(D)$ denotes an unit vector. Similarly, let $u(D)=s(D^{-1})$ and $\vec u(D)\in u(D)$ be an unit vector. Then a standard polar decomposition procedure shows that
\beq\label{eq:polardecom}
D=R_{u(D)}\begin{pmatrix}\|D\|&0\\ 0& \|D\|^{-1}\end{pmatrix}R_{\frac\pi2-s(D)},
\eeq
where $R_\theta$ denotes the rotation matrix with rotation angle $\theta$:
$$
R_\t=\begin{pmatrix}
\cos\t, & -\sin\t\\ \sin\t, &\cos\t
\end{pmatrix}.
$$
 Let $s^\perp(D)$ denotes the orthogonal direction of $s(D)$. Then by \eqref{eq:polardecom}, it is clear that $s^\perp(D)$ is the most expanding direction of $D$. Similarly, let $\vec s^\perp(D)$ denote an unit vector in $s^\perp(D)$. Defining
\beq\label{eq:AsympDirections}
s_n(j)=s[A_n(j)] \mbox{ and } u_n(j)=s[A_{-n}(j)] 
\eeq
which we call \emph{$n$-step stable and unstable directions} or simply \emph{asymptotic stable and unstable directions}. As we will show in Lemmas~\ref{l.existenceus} and \ref{l.distanceus} below, they tend to the actual stable and unstable directions as $n$ goes to infinity. This pair of functions play key role throughout this paper. In the remaining part of this paper, let $|x|$ denotes $\min\{|x|, |x-\pi|\}$ for $x\in\R\PP^1=\R/(\pi\Z)$.

Let  $A(j):\Z\to\mathrm{SL}(2,\R)$ be bounded, i.e. $\|A\|_\infty<M$, and satisfying the uniform exponential growth condition~\eqref{eq:ueg}, i.e. $\|A_n(j)\|>c\l^n$ for all $j\in\Z$ and all $n
\ge 1$, where $\l>1$ and $c>0$ are independent of $n$ and $j$. We start with the existence of a pair of instinct invariant directions.

\begin{lemma}\label{l.existenceus}
There exist $u$ and $s:\Z\rightarrow\R\PP^1$ such that
\beq\label{eq:uniform_convergence_us_1}
\lim_{n\rightarrow\infty}\|u_n-u\|_\infty=\lim_{n\rightarrow\infty}\|s_n-s\|_\infty=0.
\eeq
Moreover, $u(j)\neq s(j)$, for all $j\in\Z$, and they are both $A$-invariant.
\end{lemma}

\begin{proof}
 By definition we have
\beq\label{eq:large_contraction1}
\|A_n(j)\vec s_{n+1}(j)\|=\|A(j+n)^{-1}A_{n+1}(j)\vec s_{n+1}(j)\|\le M\|A_{n+1}(j)\|^{-1}\le C\lambda^{-n}.
\eeq
Let $\t=|s_n(j)-s_{n+1}(j)|$. Then we may write $\vec s_{n+1}(j)=(\cos\t)\vec s_{n}(j)+(\sin\t)\vec s^\perp_{n}(j)$. By the definitions of $\vec s(D)$ and $\vec s^\perp(D)$and applying $A_n(j)$ at both sides, we obtain
\begin{align}
\label{eq:angle_control}	|\sin\t|\cdot\|A_n(j)\|&=|\sin\t|\cdot\|A_n(j)\vec s^\perp_{n}(j)\|\\
\nonumber	&=\|A_n(j)\vec s_{n+1}(j)-(\cos\t)A_n(j)\vec s_{n}(j)\|\\
\nonumber	&\le \|A_n(j)\vec s_{n+1}(j)\|+|\cos\t|\cdot\|A_n(j)\vec s_{n}(j)\|\\
\nonumber &\le C\l^{-n}+\|A_n(j)\|^{-1}.
	\end{align}
Since $\|A_n(j)\|^{-1}\le C\lambda^{-n}$, \eqref{eq:angle_control} then implies that
\beq\label{eq:exp_decay_error_s_n}
|s_n(j)-s_{n+1}(j)|=\t\le C|\sin\t|\le C\lambda^{-n}\|A_n(j)\|^{-1}\le C\lambda^{-2n}.
\eeq
Thus $\{s_n(j)\}_{n\in\Z}$ is a Cauchy sequence for each $j\in\Z$ and convergence is independent of $j\in\Z$. Thus there exists some $s:\Z\rightarrow\R\PP^1$ 
\beq\label{eq:approx_to_actual}
\|s_n-s\|_\infty\le C\lambda^{-2n}.
\eeq
In particular, $\lim_{n\rightarrow\infty}\|s_n-s\|_\infty=0$. Similarly, we get all the estimates for $u_n$ and $u$.

For the invariance property, we only need to note that
\beq\label{eq:large_contraction2}
\|A_n(j+1)A(j)\vec s_{n+1}(j)\|=\|A_{n+1}(j)\vec s_{n+1}(j)\|=\|A_{n+1}(j)\|^{-1}\le C\lambda^{-(n+1)}.
\eeq
Clearly,  \eqref{eq:large_contraction2} is similar to \eqref{eq:large_contraction1}. Thus, replacing $A_n(j)$ by $A_n(j+1)$, $s_{n+1}(j)$ by $A(j)\vec s_{n+1}(j)$, and $s_n(j)$ by $s_n(j+1)$, the same argument obtaining \eqref{eq:exp_decay_error_s_n} yields
$$
|A(j)\cdot s_{n+1}(j)-s_n(j+1)|\le C\lambda^{-2n},
$$
for all $n\ge1$ and all $j\in\Z$. So we have
$$
A(j)\cdot s(j)=A(j)\cdot[\lim_{n\rightarrow\infty}s_{n+1}(j)]=\lim_{n\rightarrow\infty}A(j)\cdot s_{n+1}(j)=\lim_{n\rightarrow\infty}s_n(j+1)=s(j+1).
$$
Similarly, we get that $u$ is also $A$-invariant.

To show $u(j)\neq s(j)$, by invariance property, we only need to show that $u(j_0)\neq s(j_0)$ for some $j_0$. First, we claim that there exists a pair $(c,\l)$ satisfying the uniform exponential growth condition \eqref{eq:ueg} and the following condition:
\beq\label{eq:upperbound}
\mbox{for all } N\in\Z+, \mbox{ there exist } j_0\in\Z \mbox{ and } n_0\ge N \mbox{ such that } \|A_{n_0}(j_0)\|\le c\l^{\frac32n_0}.
\eeq
First we clearly have that $\|A_n(j)\|\le M^n$ for all $j\in\Z$ and all $n\ge 1$. Now we start with a pair $(c_0,\l_0)$ satisfying \eqref{eq:ueg}. If \eqref{eq:upperbound} also holds true for $(c_0,\l_0)$, then we are done. Otherwise, \eqref{eq:upperbound} is false for $(c_0,\l_0)$. Then there exists a $N_1\in \Z_+$ such that for all $j\in\Z$ and all $n\ge N_1$, it holds that $\|A_n(j)\|\ge c_0\l_0^{\frac32n}$. 

We now set $\l_1=\l_0^{\frac32}$ and $c_1=\min\{c_0, \l_1^{-N_1}\}$. We claim that $A$ satisfies \eqref{eq:ueg} for the new pair $(c_1, \l_1)$. Indeed, for $n<N_1$, we have
$$
\|A_n(j)\|\ge 1\ge c_1\l_1^{N_1}\ge c_1\l_1^{n}
$$
and for $n\ge N_1$, it holds that
$$
\|A_n(j)\|\ge c_0\l_0^{\frac32n}=c_0\l^{n}_1\ge c_1\l_1^{n}.
$$
We repeat the process with the new pair $(c_1,\l_1)$. Since $\l_0>1$, the process must terminate before step $k$ where $\l_0^{(3/2)^k}>M$. Thus, we find the pair $(c,\l)$ with the properties \eqref{eq:ueg} and \eqref{eq:upperbound}.

We now work with the pair $(c,\l)$ as above. Let $j_0$ and $n_0\ge N$ be from \eqref{eq:upperbound} for some $N$. By \eqref{eq:approx_to_actual}, it holds that $|s_{n_0}(j_0)-s(j_0)|<C\l^{-2n_0}$. Write 
$$
\vec s(j_0)=\cos(s(j_0)-s_{n_0}(j_0))\vec s_{n_0}(j_0)+\sin(s(j_0)-s_{n_0}(j_0))\vec s^\perp_{n_0}(j_0).
$$ 
Combining everything together, we then have 
\begin{align*}
	\|A_{n_0}(j_0)\vec s(j_0)\|&=\|A_{n_0}(j_0)\cos(s(j_0)-s_{n_0}(j_0))\vec s_{n_0}(j_0)+A_{n_0}(j_0)\sin(s(j_0)-s_{n_0}(j_0))\vec s^\perp_{n_0}(j_0)\|\\
	&\le \|A_{n_0}(j_0)\vec s_{n_0}(j_0)\|+\|A_{n_0}(j_0)\vec s^\perp_{n_0}(j_0)\|\cdot|s(j_0)-s_{n_0}(j_0)|\\
	&\le \|A_{n_0}(j_0)\|^{-1}+\|A_{n_0}(j_0)\|C\l^{-2n_0}\\
	&\le C\l^{-{n_0}}+C\lambda^{\frac{3}2n_0}\l^{-2n_0}\\
	&\le C\l^{-n_0/2}.
\end{align*}
Thus for large $N$, it holds that $\|A_{n_0}(j_0)\vec s(j_0)\|<C\l^{-n_0/2}<1$. Similarly, we can get that 
$$
\|A_{-n_0}(j_0+n_0)\vec{u}(j_0+n_0)\|<1.
$$ 
Since $A_{n_0}(j_0)\cdot u(j_0)=u(j_0+n_0)$ and $A_{n_0}(j_0)^{-1}=A_{-n_0}(j_0+n_0)$, we then have
$$
\|A_{n_0}(j_0)\vec{u}(j_0)\|>1,
$$
which implies that $u(j_0)\neq s(j_0)$, concluding the proof.

\end{proof}
Next we show that $s$ and $u$ are away from each other with distances bounded uniformly from below.
\begin{lemma}\label{l.distanceus}
There exists a $\gamma>0$ in $\R\PP^1$ such that 
\beq\label{eq:lowerbound_su}
\inf_{j\in\Z}|s(j)-u(j)|\ge\gamma.
\eeq
\end{lemma}

\begin{proof}
Let $(\Omega, T, F)$ be as in Corollary~\ref{c:uniformeg2}. In other words, $\Omega=\mathrm{Hull}(A)$, $T$ is the left shift map, and $F(\omega)=\omega_0\in\SL(2,\R)$ which is clearly continuous. Then by uniform growth condition of $A$ and Corollary~\ref{c:uniformeg2}, it holds that
\beq\label{eq:ueg_stom}
\|F_n(\omega)\|\ge c\lambda^n, \mbox{ for all }\omega\in\Omega \mbox{ and all } n\ge1.
\eeq

 For each $\omega$, we define $B^\omega(j)=F(T^j\omega)$ for all $j\in\Z$. Let $s^\omega_n(j)$ and $u^\omega_n(j)$ be the asymptotic stable and unstable directions of $B^\omega$. By \eqref{eq:ueg_stom}, for each $\omega$, we could treat the sequence $\{B^\omega(j), j\in\Z\}$ as the sequence $\{A(j), j\in\Z\}$ in Lemma~\ref{l.existenceus}. If we define 
$$
 s_n(\omega)=s(F_n(\omega)) \mbox{ and } u_n(\omega)=s(F_{-n}(\omega)),
$$
 then $s_n(\omega)=s^\omega_n(0)$ and $u_n(\omega)=u^{\omega}_n(0)$. Applying \eqref{eq:exp_decay_error_s_n} to $B^\omega$ with $j=0$, we then obtain 
 $$
 |s_n(\omega)-s_{n+1}(\omega)|<C\l^{-2n}\mbox{ and }  |u_n(\omega)-u_{n+1}(\omega)|<C\l^{-2n},
 $$
 where from the proof of Lemma~\ref{l.existenceus}, $C$ has nothing to do with $\omega$. Hence, there exist some $u, s:\Omega\rightarrow\R\PP^1$, it holds that
\beq\label{eq:uniform_convergence_us2}
\lim_{n\rightarrow\infty}\|u_n-u\|_\infty=\lim_{n\rightarrow\infty}\|s_n-s\|_\infty=0.
\eeq
Note that $s(T^j\omega)$ is the stable direction $s^\omega(j)$ of $B^\omega$. In particular, for each $\omega$, it holds that 
$$
F(\omega)\cdot s(\omega)=B^\omega(0)\cdot s^\omega(0)=s^\omega(1)=s(T\omega),
$$ 
i.e. $s$ is $(T,F)$-invariant. Similarly, $u$ is $(T,F)$-invariant as well. Moreover, for each $\omega$, $s^\omega(0)\neq u^\omega(0)$ which implies that $s(\omega)\neq u(\omega)$ for each $\omega\in\Omega$. 

On the other hand, the most contracted direction is $C^\infty$ away from $\mathrm{SO}(2,\R)$, $u_n(\omega)$ and $s_n(\omega)$ are continuous in $\omega$ for all large $n$ by \eqref{eq:ueg_stom}. Thus, \eqref{eq:uniform_convergence_us_1} implies that $u$ and $s$ are continuous as well. Thus by compactness of $\Omega$ and continuity of $u$, $s$, we have for some $\gamma>0$
$$
|u(\omega)-s(\omega)|\ge\gamma,\ \mbox{for all }\omega\in\Omega.
$$
Since $A\in\Omega$ and $u(j)=u[T^j(A)]$, we get $|u(j)-s(j)|\ge\gamma$, for all $j\in\Z$.
\end{proof}

Finally, we need the following lemma, which is essentially \cite[Lemma~11]{zhang} adapted to the present setting. The proof is presented for completeness. 

\begin{lemma}\label{l:inv_cone}
	Let $\Omega$ be a set and $T:\Omega\to\Omega$ a bijection. Consider two maps $b:\Omega\to\R\setminus\{0\}$ and $\beta:\Omega\to\R\PP^1$. Define $D:\Omega\to\mathrm{SL}(2,\R)$ as
	$$
	D(\omega)=\begin{pmatrix}
b(\omega) &0\\ 0 &b(\omega)^{-1}
	\end{pmatrix}\cdot R_{\frac\pi2-\b(\omega)}
	$$
	 and consider the cocycle $(T,D)$. Define $\rho(\delta):=(\delta+\frac3{\delta})(\frac{\delta}2+1)$ for $\delta>0$. Assume there is a $\delta_0>0$ and a $\l_0>\rho(\delta_0)$ with the following properties:  for all $\omega\in\Omega$, it holds that
	 $$
	 |\tan\b(\omega)|>\delta_0 \mbox{ and }|b(\omega)|>\l_0.
	 $$ 
	 Then if we define $\CF\subset\R\PP^1$ to be $\CF:=\{\t\in\R\PP^1: |\tan\t|<\delta_0/2\}$, it holds that
	\begin{enumerate}
		\item $\CF$ is $(T,D)$-invariant, i.e. $D(\omega)\cdot \CF\subset \CF$.
		\item There is an $\a>1$ such that $\|D(\omega)\vec \t\|>\a$ for all unit vector $\vec \t$ with $\t\in\CF$.
		\end{enumerate} 
	\end{lemma}
\begin{proof}
	Let $\t\in\CF$. It is straightforward to see that the two variable function $g(t,r)=\frac{1+tr}{t-r}$, $t>r>0$ is decreasing in $t$ and increasing in $r$. Then a direction computation shows that
	\begin{align*}
	\left|\tan[D(\omega)\cdot\t]\right|&=\left|a^{-2}(\omega)\frac{1+\tan[\b(\omega)]\tan\t}{\tan[\b(\omega)]-\tan\t}\right|\\
	&\le \l_0^{-2}\frac{1+|\tan[\b(\omega)]|\cdot|\tan\t|}{|\tan[\b(\omega)]|-|\tan\t|}\\
	&\le \frac{2+\delta_0^2}{\l^2_0\delta_0}\\
	&<\delta_0/2,
	\end{align*}
	where the last inequality follows from the fact that $\l_0^2>(\delta_0+3/\delta_0)^2>4(1+\delta^2_0)/\delta^2_0.$ Note a similar estimate as above also shows that for all $\t\in\CF$,
		\begin{align*}
	\left|\tan\left[R_{\frac\pi2-\b(\omega)}\cdot\t\right]\right|&=\left|\frac{1+\tan[\b(\omega)]\tan\t}{\tan[\b(\omega)]-\tan\t}\right|\\
	&\le \frac{1+|\tan[\b(\omega)]|\cdot|\tan\t|}{|\tan[\b(\omega)]|-|\tan\t|}\\
	&<\frac{2+\delta_0^2}{\delta_0}.
	\end{align*}
	Now take $\t\in\CF$, then we may write $\vec \t=\binom{1}{r}/\sqrt{1+r^2}$ with $|r|<\delta_0/2$. Then the above estimate shows that we may write
	$$
	R_{\frac\pi2-\b(\omega)}\vec\t=\frac1{\sqrt{(1+t^2)(1+r^2)}}\binom{1}{t},
	$$
	where $|t|<\frac{2+\delta^2_0}{\delta_0}$. Thus we obtain
	\begin{align*}
	\|D(\omega)\vec \t\|^2&=\left\|\begin{pmatrix}
	b(\omega), &0\\ 0, &b(\omega)^{-1}
	\end{pmatrix}\cdot R_{\frac\pi2-\b(\omega)}\vec\t\right\|^2\\
	&=\frac{b^2(\omega)+b^{-2}(\omega)t^2}{(1+t^2)(1+r^2)}\\
	&\ge \frac{\l_0^2}{\left[1+(\frac{\delta_0}{2})^2\right]\left[1+(\frac2{\delta_0}+\delta_0)^2\right]}\\
	&>\frac{(\frac{\delta_0}2+1)^2}{1+(\frac{\delta_0}{2})^2}\cdot
	\frac{(\delta_0+\frac3{\delta_0})^2}{1+(\frac2{\delta_0}+\delta_0)^2}\\
	&>1,
	\end{align*}
	concluding the proof.
\end{proof}
Lemma~\ref{l:inv_cone} basically says that under its conditions, $(T,D)$ admits a constant invariant cone field. Moreover, for each vector in this cone field, the norm expands uniformly under the cocycle iteration. It clearly implies the uniform exponential growth condition. Note no addition structure of $\Omega$ is assumed in Lemma~\ref{l:inv_cone}.

Now we are ready to prove Theorem~\ref{t.uiformeg}. Since we already have invariance by Lemma~\ref{l.existenceus}, it suffices to show that vectors in the direction of $u$ are uniformly exponentially contracted backward under iteration of $A$ while vectors in the direction of $s$ are contracted forward. We only prove it for the $u$-direction since the proof of the $s$-direction is completely analogous.

\begin{proof}[Proof of Theorem~\ref{t.uiformeg}]
We show that there is some $\lambda_0>1$ such that for all $n\in\Z_+$ and all $j\in\Z$, it holds that
$$
\|A_{-n}(j)\vec{u}(j)\|\le C\lambda_0^{-n}.
$$
By $A$-invariance of $u$, we may equivalently show for all $n\in\Z_+$ and all $j\in\Z$ that
$$
\|A_n(j)\vec{u}(j)\|\ge c\lambda_0^{n}.
$$
By proofs of Lemma~\ref{l.existenceus} and Lemma~\ref{l.distanceus}, there exists an $N\in\Z_+$ such that for all $k\ge N$ and for all $j\in\Z$, it holds that
\begin{align*}
|u_k(j)-s_k(j)|&>\frac\gamma2,\\  
|u(j)-u_k(j)|&<C\lambda^{-2k}:=\varepsilon_k, \mbox{ and }\\
\|A_k(j)\|&\ge c\lambda^k:=\Lambda_k.
\end{align*}
 By definition, we have
$$
A_k(j)=R_{u_k(j+k)}\begin{pmatrix}\|A_k(j)\|&0\\0&\|A_k(j)\|^{-1}\end{pmatrix}R_{\frac\pi2-s_k(j)}.
$$
For each $k\ge N$, we may consider a new map $B^{(k)}:\Z\rightarrow\mathrm{SL}(2,\R)$ such that
\beq\label{eq:conj1}
B^{(k)}(j):=\begin{pmatrix}\|A_k(jk)\|&0\\0&\|A_k(jk)\|^{-1}\end{pmatrix}R_{\frac\pi2+u_k(jk)-s_k(jk)}.
\eeq
Then it is clear that 
\beq\label{eq:conjugate}
B^{(k)}(j)=R_{-u_k((j+1)k)}A_k(jk)R_{u_k(j)}.
\eeq 
We may choose $k$ large so that $\Lambda_k>\rho(\frac\gamma2)$ and $|\tan\e_k|<\frac\gamma4$. Fix such a $k$ and apply Lemma~\ref{l:inv_cone} to $(\Z,T,B^{(k)})$ where $T(j)=j+1$. For each $j$, let $\vec v_j\in [u(jk)-u_k(jk)]$ be an unit vector. We then obtain that for some $\a>1$ it holds true for each $j\in\Z$ that 
$$
\|B^{(k)}_n(j)\vec v_j\|\ge \alpha^n,\ \mbox{for all }j\in\Z \mbox{ and all }n\in\Z^+.
$$

Then we may pass the estimate to $A$ via \eqref{eq:conj1} as follows. For each pair $(j,n)$, we may first find some $q$ so that $(q-1)k< j\le qk$. Then we may write $n-(qk-j)=kr+p$ where $0\le p<k$. Note $r\ge [\frac{n}{k}]-2$. Then by \eqref{eq:conjugate}, we obtain for all $j\in\Z$ and all $n\in\Z^+$ that
\begin{align}
\nonumber \|A_n(j)\vec{u}(j)\|&=\|A_{p}(kr+qk)\cdot A_{kr}(qk)\cdot A_{qk-j}(j)\vec u(j)\|\\
\nonumber &\ge c\|B^{(k)}_r(q)\cdot R_{-u_k(qk)}\cdot A_{qk-j}(j)\vec u(j)\|\\
\nonumber &\ge c\|B^{(k)}_r(q)\vec v_{q}\|\\
\nonumber &\ge c\alpha^{r}\\
\label{eq:exp-norm-growth}&\ge c[\alpha^{\frac1k}]^n,
\end{align}
where the first inequality we use the fact that $\|A_p(kr+qk)\|^{-1}>c$ for some $c=c(M,k)$, equation \eqref{eq:conjugate}, and the fact that rotation matrices preserve the operator norm; for the second the inequality, we use the fact that $0<qk-j<k$ which implies $R_{-u_k(qk)}\cdot A_{qk-j}(j)\vec u(j)=cR_{-u_k(qk)}\vec u(qk)$ for some $c=c(M,k)$. Evidently, $R_{-u_k(qk)}\vec u(qk)$ is an unit vector in the direction of $u(qk)-u_k(qk)$ which is by definition is $\vec v_q$. 

Clearly, \eqref{eq:exp-norm-growth} is the desired estimate with $\lambda_0=\alpha^{\frac1k}$. Similarly, we may get the estimate for $s$. Thus, as the notation suggested, we show that $u$ is the unstable direction of $A$ as in Definition~\ref{d.uhsequence} and $s$ is the stable direction. This completes the proof of the Theorem~\ref{t.uiformeg}.
\end{proof}
\begin{remark}\label{r:uniformeg}
	The proof of Corollary~\ref{c:uniformeg} is identical to the proof of Theorem~\ref{t.uiformeg}. One only needs to replace $\Z$ by $\Omega$, $A:\Z\to\SL(2,\R)$ by $A:\Omega\to\SL(2,\R)$, and $A_n(j)$ by $A_n(\omega)$. In fact, the proof of Corollary~\ref{c:uniformeg} is even simpler: there is no need to introduce the \emph{Hull} of a sequence. 
\end{remark}
From the proof Lemma~\ref{l.distanceus}, the proof of Theorem~\ref{t.uiformeg}, Lemma \ref{l:inv_cone}, and Corollary~\ref{c:uniformeg}, it is actually not difficult to deduce the following Corollary~\ref{c:SeparationUStoUH} which give another equivalent condition of uniform hyperbolicity. Let $(\Omega,T)$ and $A:\Omega\to\SL(2,\R)$ be as in Corollary~\ref{c:uniformeg} and consider cocycle dynamics $(T,A)$ as \ref{eq:cocycle0}. For $n\ge 1$, let $s_n(\omega)=s[A_n(\omega)]$ and $u_n(\omega)=s[A_{-n}(\omega)]$.
\begin{corollary}\label{c:SeparationUStoUH}
$(T,A)\in\CU\CH$ if and only if there are some $k\in\Z_+$ and some $\delta_0>0$ so that the following hold ture for all $\omega\in\Omega$:
\begin{align}
\label{eq:largenorm}& \|A_k(\omega)\|> \rho(\delta_0), \\
\label{eq:largedistance}& |s_k(\omega)-u_k(\omega)|>\delta_0.
 \end{align}
\end{corollary}
\begin{proof}
The only if part is quite straightforward. $\CU\CH$ implies the uniform exponential growth of $\|A_n(\omega)\|$. Hence, by the proof of Lemma~\ref{l.distanceus}, we obtain $|u(\omega)-s(\omega)|>\gamma$ for all $\omega\in\Omega$ and for some $\gamma>0$, and the uniform convergence of $s_n(\omega)$ (resp. $u_n(\omega)$) to $s(\omega)$ (resp. $u(\omega)$). This clearly implies for all large $k$, \eqref{eq:largedistance} holds true with $\delta_0=\frac\gamma2$. By the uniform exponential growth condition, we may pick some $k$ large so that \eqref{eq:largenorm} holds true.

For the if part, similar to the proof of Theorem~\ref{t.uiformeg}, conditions~\eqref{eq:largenorm} and ~\eqref{eq:largedistance} imply the existence of a constant invariant cone field $\CF:\Omega\to\R\PP^1$ for the new cocycle $(T^k,B)$ where
$$
B(\omega):=\begin{pmatrix}\|A_k(\omega)\|&0\\0&\|A_k(\omega)\|^{-1}\end{pmatrix}
R_{\frac\pi2+u_k(\omega)-s_k(\omega)}.
$$
It holds that $\|B_n(\omega)\|=\|A_{nk}(\omega)\|$ for all $n\in\Z_+$ since
$$
B(\omega)=R_{-u_k(T^k\omega)}A_k(\omega)R_{u_k(\omega)}.
$$
  Then apply Lemma~\ref{l:inv_cone} to $(T^k,B)$, we obtain for some $\a>1$ that $\|B(\omega)\vec v\|>\a$ for all $\omega\in\Omega$ and for all unit vector $\vec v\in \CF$. It clearly implies the uniform exponential growth of $(T,B)$, hence the uniform exponential growth of $(T,A)$. By Corollary~\ref{c:uniformeg}, one then gets $(T,A)\in\CU\CH$.
\end{proof}

The main advantage of this description is that while other definitions involve $n$-step cocycle iterations for all $n\in\Z_+$, Corollary~\ref{c:SeparationUStoUH} only need information for cocycle iterations up to a certain finite step $k$. Hence, it may become a useful tool to produce uniformly hyperbolic systems. In fact, this is exactly one of the key ideas of \cite{wangzhang1} to show \emph{Cantor Spectrum} (i.e. the spectrum is a Cantor set) for a class of quasiperiodic Schr\"odinger operators. One may have a more enhanced version of Corollary~\ref{c:SeparationUStoUH}, see e.g. \cite[Lemma 5]{zhang2}. This idea of separation of asymptotic stable and unstable directions are promising in the sense that it may further be used to get more results concerning Cantor spectrum, which is another central topic in the spectral analysis of quasiperiodic Schr\"odinger operators.  

To show Corollary~\ref{c:SeparationUStoUH}'s usefulness, we give the following almost immediate consequence of Corollary~\ref{c:SeparationUStoUH}. Let $s^{(\cdot)}_n, s^{(\cdot)}:\Omega\to\R\PP^1$ denote the $n$-step asymptotic stable and the stable directions of $(T,\cdot)\in\CU\CH$. Similarly, we can define $u^{(\cdot)}_n$ and $u^{(\cdot)}$. Recall that the most contracted direction map is $C^\infty$ away from $\mathrm{SO}(2,\R)$. Thus by compactness of $\Omega$, $s^{(\cdot)}_n, u^{(\cdot)}_n: C^0(\Omega, \mathrm{SL}(2,\R))\to C^0(\Omega,\R\PP^1)$ are continuous as long as the $n$-step cocycle iterations are uniformly away from $\mathrm{SO}(2,\R)$, i.e. for some $\a>1$, $\|(\cdot)_n(\omega)\|>\a$ for all $\omega\in\Omega$.

\begin{corollary}\label{c:cont_su}
	Let $(\Omega,T,A)$ be as in Corollary~\ref{c:uniformeg}. Suppose $(T,A)\in\CU\CH$. Then for any $\e>0$, there exists a $\delta>0$ so that if $B:\Omega\to\mathrm{SL}(2,\R)$ satisfies $\|A-B\|_{\infty}<\delta$, then $(T,B)\in\CU\CH$. Moreover, it holds that
	\beq\label{eq:cont_su}
	\|s^{(A)}-s^{(B)}\|_{\infty}<\e \mbox{ and }\|u^{(A)}-u^{(B)}\|_{\infty}<\e.
	\eeq
	In other words, $s^{(\cdot)}, u^{(\cdot)}:C^0\left(\Omega, \mathrm{SL}(2,\R)\right)\cap\CU\CH\to C^0(\Omega,\R\PP^1)$ are continuous.
\end{corollary}
\begin{proof}
By Corollary~\ref{c:SeparationUStoUH}, $(T,A)\in\CU\CH$ implies that \eqref{eq:largenorm}-\eqref{eq:largedistance} hold true for some $k\in\Z_+$ and $\l>1$. Then by the fact stated above Corollary~\ref{c:cont_su}, there is a $\delta>0$ such that if $\|B-A\|_{\infty}<\delta$, then \eqref{eq:largenorm}-\eqref{eq:largedistance} hold true for $(T,B)$ by the fact stated above Corollary~\ref{c:cont_su}. Hence, $(T,B)\in\CU\CH$ by Corollary~\ref{c:SeparationUStoUH}. 

For the proof of \eqref{eq:cont_su}, by the proof of \eqref{eq:uniform_convergence_us_1} in Lemma~\ref{l.existenceus}, $s_n$ (resp. $u_n$) converges to $s$ (resp. $u$) uniformly in $\omega\in\Omega$. So we may fix a $N\in\Z_+$ large so that
	$$
	\|s^{(\star)}-s^{(\star)}_N\|_\infty< \e/3 \mbox{ and } \|u^{(\star)}-u^{(\star)}_N\|_\infty<\e/3,
	$$
	where $\star=A$ or $B$, and so that $\|A_N(\omega)\|>\a$ and $\|B_N(\omega)\|>\a$ for some $\a>1$ and for all $\omega\in\Omega$.  Choosing $\delta$ small, by the fact stated above the Corollary~\ref{c:cont_su}, it holds that
	$$
	\|s^{(A)}_N-s^{(B)}_N\|_\infty<\e/3 \mbox{ and } \|u^{(A)}_N-u^{(B)}_N\|_\infty<\e/3.
	$$
	Hence, \eqref{eq:cont_su} is a consequence of the triangle inequality.
	\end{proof}

\subsection{No Nontrivial Bounded Orbit Implies Uniform Hyperbolicty} The main part of the proof of Theorem~\ref{l.nontrivialbo} is the only if part, i.e. we need to show if not $\CU\CH$, then \eqref{eq:NonTrivailBounded} holds. To this end, we need to argue by contradiction and show the contrary implies $\CU\CH$. Recall here $\Omega$ is a compact metrice space.
\begin{proof}[Proof of Theorem~\ref{l.nontrivialbo}]
	For the proof of if part, we note that $\CU\CH$ implies that for all $\vec v\in\R^2$ and all $\omega\in\Omega$, it holds that
	$$
	\vec v=c_1\vec u(\omega)+c_2\vec s(\omega), \mbox{ for some } c_1,\ c_2\in\R.
	$$
	Thus if $\vec v\neq0$, then we must have that $\|A_n(\omega)v\|$ grows exponentially fast either as $n\rightarrow\infty$ or as $n\rightarrow-\infty$.
	
	For the proof of only if part, the main thing is to show the following.  Suppose there exist $\varepsilon>0$ and $L\in\Z^+$ with the following property: for all $(\omega,\vec v)\in\Omega\times\mathbb S^1$, there is a $|l|\le L$ such that
	$$
	\|A_l(\omega)\vec v\|\ge1+\varepsilon.
	$$
	Then we claim $(T,A)$ satisfies uniform exponential growth condition. 
	
	For $(\omega,\vec v)\in\Omega\times\mathbb S^1$, let $l(\omega,\vec v)$ be defined as
	\beq\label{eq:FirstLargeTime}
	|l(\omega,\vec v)|=\min\{|l|: |l|\le L \mbox{ and } \|A_l(\omega)\vec v\|\ge1+\varepsilon\}
	\eeq
For each $(\omega,\vec v)\in\Omega\times\SS^1$, we then define the following sequence $(l_k, \vec v_k, \omega_k)_{k\ge 0}$ by induction:
	$$
	l_0=0,\ \vec v_0=\vec v,\mbox{ and } \omega_0=\omega,
	$$
and for each $k\ge1$,
	$$
	l_k=l(\omega_{k-1},\vec v_{k-1}),\ \vec v_k=\frac{A_{l_{k}}(\omega_{k-1})\vec v_{k-1}}{\|A_{l_{k}}(\omega_{k-1})\vec v_{k-1}\|},\mbox{ and }\omega_k=T^{l_{k}}(\omega_{k-1}).
	$$
Then it is straightforward to see that for each pair $(p,k)$ with $0\le p\le k-1$, it holds that
\begin{align}
	\nonumber \|A_{l_k+l_{k-1}+\cdot+l_{p+1}}(\omega_{p})\vec v_{p}\|&=\|A_{l_{k-1}+\cdot+l_{p+1}}(\omega_p)\vec v_{p}\|\cdot \|A_{l_k}(\omega_{k-1})\vec v_{k-1}\|\\
\nonumber 	&=\prod^{k-1}_{j=p}\|A_{l_{j+1}}(\omega_j)\vec v_j\|\\
\nonumber 	&\ge (1+\e)^{k-p}\\
\label{eq:ChainNormLarge}	&\ge 1+\e.
	\end{align}
	
For $p\in\Z$ and $q\in\Z_+$, define
 $$
 I_{q}(p)=[p-q+1, p+q-1]\subset\Z.
 $$ 
 Let $L_k=\sum^{k}_{j=0}l_j$. By the minimality of $l_k$ from \eqref{eq:FirstLargeTime}, we have for each $k\ge 2$,
	\beq\label{eq:ExtendedInterval1}
	L_k\notin I_{k-1}:=\bigcup^{k-1}_{p=1}I_{|l_p|}\left(L_{p-1}\right).
	\eeq
Indeed, if $L_k\in I_{k-1}$, then $L_k\in I_{|l_p|}(L_{p-1})$ for some $p\ge 1$, then $|L_k-L_{p-1}|<|l_{p}|$ and by \eqref{eq:ChainNormLarge}
$$
\|A_{L_k-L_{p-1}}(\omega_{p-1})\vec v_{p-1}\|=\|A_{l_k+\cdots+l_{p}}(\omega_{p-1})\vec v_{p-1}\|\ge 1+\e,
$$
which contradicts the minimality property of $l_{p}$.

Clearly, $L_k-L_{k-1}=l_{k}$ implies that $L_k$ is on the boundary of $I_{|l_k|}(L_{k-1})$. Hence by \eqref{eq:ExtendedInterval1}, $L_k$ is on the boundary of $I_{k-1}$. This in turn implies that $I_k$ is a connected interval in $\Z$. Moreover, by definition we have $l_j\neq 0$ for all $j>0$. Thus it must hold for all $k\ge0$ that
	\beq\label{eq:ExtendedInterval2}
	|I_{k+1}|\ge|I_k|+1,
	\eeq
which in particular implies that there exists a $0\le K\le L$ such that $|I_K|\ge L$ for all $(\omega,\vec v)$. Hence $|I_k|\ge L$ for all $k\ge K$. 

Next, since $|l_k|\le L$ for all $k\ge0$, by \eqref{eq:ExtendedInterval1} and the fact that $I_{k}$ is connected in $\Z$, we must have that $L_k$ is at the same side of $I_{k-1}$ for all $k>K$. In other words, as $k$ is getting large, the interval $I_k$ expands along the same direction for all $k\ge K$. Note $0=l_0\in I_k$ for all $k\ge 1$. Thus for each $(\omega,\vec v)$, we obtain for some $1\le K\le L$,
$$
\mbox{ either }L_{k+1}>L_{k}>0 \mbox{ for all } k\ge K; \mbox{ or } 0>L_{k}>L_{k+1} \mbox{ for all } k\ge K.
$$ 

Then we claim that for each $\omega\in\Omega$, there must exist a $\vec v_0\in\SS^1$ so that 
$$
L_{k+1}(\omega,\vec v_0)>L_{k}(\omega,\vec v_0)>0 \mbox{ for all }k\ge K.
$$
Indeed, suppose this is not ture. Then for some $\omega\in\Omega$ and for all $\vec v\in\SS^1$, it holds that 
$$
0>L_{k}>L_{k+1} \mbox{ for all } k\ge K.
$$
Note that $|L_k-L_{k-1}|=|l_k|\le L$. Hence $|L_K|\le KL\le L^2$. Thus for all $n\in\Z_{-}$ with $|n|>L^2$, there must exist some $k\ge K$ so that $0>L_k>n\ge L_{k+1}$. It clearly holds that 
$$
0< L_k-n<L\mbox{ and } k\ge\frac{|n|}{L}.
$$ 
Hence a similar argument as in \eqref{eq:ChainNormLarge} shows that:
\begin{align}
\label{eq:ueg1}\|A_n(\omega)\vec v\|&\ge c\|A_{L_{k}}(\omega)\vec v\|\\
\nonumber &=\|A_{l_k+l_{k-1}+\cdot+l_{1}}(\omega)\vec v\|\\
\nonumber &=\prod^{k-1}_{j=0}\|A_{l_{j+1}}(\omega_j)\vec v_j\|\\
\nonumber &\ge c(1+\e)^k\\
\nonumber &\ge c(1+\e)^{\frac{|n|}{L}},
\end{align}
where the estimates hold uniformly true for all $\omega\in\Omega$ and for all $\vec v\in\SS^1$. Choosing $n$ large and picking an unit vector $\vec v\in s[A_n(\omega)]$, we then obtain
$$
1>\|A_n(\omega)\|^{-1}=\|A_n(\omega)\vec v\|\ge c(1+\e)^{\frac{|n|}{L}}> 1,
$$
a contradiction. Consequently, for each $\omega$, we may choose some $\vec v_0$ so that 
$$
L_{k+1}(\omega,\vec v_0)>L_{k}(\omega,\vec v_0)>0\mbox{ for all }k\ge K.
$$ 
Then by a similar argument as the estimate \eqref{eq:ueg1}, we obtain for all $n> L^2$:
\beq\label{eq:ueg0}
\|A_n(\omega)\|\ge\|A_n(\omega)\vec v_0\|\ge c(1+\e)^\frac{n}{L}.
\eeq
Note $c$ is independent of $\omega$. Changing $c$ in \eqref{eq:ueg0} if necessary to incorporate all $n$ with $1\le n\le L^2$, we then obtain uniform exponential growth property of $(T,A)$. By Theorem~\ref{t.uiformeg}, $(T,A)\in\CU\CH$. 

Hence if we assume $(T,A)\notin\CU\CH$, then for all $\varepsilon>0$ and for all $L>0$, there exists some $(\omega,\vec v)\in\Omega\times\mathbb S^1$ such that for all $|l|\le L$ we have
	$$
	\|A_l(\omega)\vec v\|<1+\varepsilon.
	$$
	Thus for each $m\in\Z^+$, we get a $(\omega^{(m)}, \vec v^{(m)})\in\Omega\times\mathbb S^1$ satisfies the above condition with $\e=\frac1m$ and $L=m$. Note that $\Omega\times\mathbb S^1$ is a compact metric space since $\Omega$ is a compact metric space. Thus, by passing to a subsequence, we may assume for some $(\omega,\vec v)\in\Omega\times\mathbb S^1$ that
	$$
	\lim_{m\rightarrow\infty}(\omega^{(m)},\vec v^{(m)})=(\omega,\vec v)
	$$
	Thus we have for each $n\in\Z$, 
	$$
	\|A_n(\omega)\vec v\|\le \lim_{m\to\infty}\|A_n(\omega^{(m)})\vec v^{(m)}\|\le \lim_{m\to\infty}\left(1+\frac1m\right)=1,
	$$
	concluding the proof.
\end{proof}

\bigskip

\section{Johnson's Theorem for sequence potentials}\label{s:johnson}

In this Section, we prove Theorem~\ref{t.uniformhers}. Then we may deduce Theorem~\ref{t.uniformhers2}. For $\psi=(\psi_n)_{n\in\Z}\in\ell^2(\Z)$, let $\|\psi\|$ denotes $\ell^2$ norm of $\psi$, i.e. $\|\psi\|^2=\sum_{n\in\Z}|\psi_n|^2$.

\subsection{Uniform Hyperbolicity Implies Invertibility of the Operator}\label{ss:uh_to_resolvent}
Let us first show $\sigma(H_v)\subset\{E:A^{(E-v)}\notin\CU\CH\}$. Equivalently, we show
$$
\{E:A^{(E-v)}\in\CU\CH\}\subset\rho(H_v).
$$
Fix $E$ such that $A^{(E-v)}\in\CU\CH$. Then by Definition~\ref{d.uhsequence}, $A^{(E-v)}$ has unstable direction $u$ and stable direction $s$. Let $\psi^u, \psi^s\in\R^\Z$ be solution to the eigenfunction equation $H_v\psi=E\psi$ and be such that
$$
\binom{\psi^u_0}{\psi^u_{-1}}\in u(0), \binom{\psi^s_0}{\psi^s_{-1}}\in s(0).
$$
We normalize them so that
$$
\det\begin{pmatrix}\psi^s_0 &\psi^u_0\\\psi^s_{-1} &\psi^u_{-1}\end{pmatrix}=1.
$$
Thus we have for all $n\in\Z$,
\beq\label{eq:determinant1}
\det\begin{pmatrix}\psi^s_{n} &\psi^u_{n}\\\psi^s_{n-1} &\psi^u_{n-1}\end{pmatrix}=\det \left[A^{(E-v)}_n(0)\begin{pmatrix}\psi^s_{0} &\psi^u_{-1}\\\psi^s_{-1} &\psi^u_{-1}\end{pmatrix}\right]=1.
\eeq
Then we may construct the so-called Green's function $G:\Z^2\rightarrow\R$ of $H_v-E$ as:
\beq\label{eq:green}
G(p,q)=\begin{cases}\psi^u_{p}\cdot \psi^s_{q} & {\rm if}\ p\le q,\\\psi^u_{q}\cdot\psi^s_{p} & {\rm if}\ q<p.\end{cases}
\eeq
Note that $G(p,q)$ is symmetric, i.e. $G(p,q)=G(q,p)$. Below, we may often flip the $(p,q)$ in $G$.
\begin{lemma}\label{l:exponential_decay_of_Green's_Function}
	There  exist $C>0,\ \lambda>1$, independent of $(p,q)$, such that
	$$
	|G(p,q)|\le \frac C\gamma\lambda^{-|p-q|}, \mbox{ for all } (p,q)\in\Z^2.
	$$
Here $\gamma$ is from \eqref{eq:lowerbound_su}, i.e. the uniform lower bound between $u$ and $s$ in Lemma~\ref{l.distanceus}.
	\end{lemma}
\begin{proof}
	Let $\vec \psi^s(n)=\binom{\psi^s_{n}}{\psi^s_{n-1}}$ and $\vec \psi^u(n)=\binom{\psi^u_{n}}{\psi^u_{n-1}}$. Note also
$$
1=\det[\vec \psi^s(n), \vec \psi^u(n)]=\|\vec \psi^s(n)\|\cdot \|\vec \psi^u(n)\|\cdot |\sin(u(n)-s(n))|.
$$
Thus for all $n\in\Z$, it holds that
$$
\|\vec \psi^s(n)\|\cdot \|\vec \psi^u(n)\|\le \frac{C}\gamma.
$$
Without loss of generality, we assume $p\le q$. Then
\begin{align*}
	|G(p,q)|&=|\psi^u_p\psi^s_q|\\
	&\le \|\vec \psi^u(p)\|\cdot \|\vec \psi^s(p)\|\cdot \frac{\|\vec \psi^s(q)\|}{\|\vec \psi^s(p)\|}\\
	&\le\frac{C}{\gamma}\frac{\|A^{(E-v)}_{q-p}(p)\vec \psi^s(p)\|}{\|\vec \psi^s(p)\|}\\
	&\le \frac{C}{\gamma}\l^{-(q-p)},
\end{align*}
where the last inequality follows from $\vec \psi^s(n)\in s(n)$ for all $n\in\Z$. This concludes the proof.
\end{proof}
 Define the operator $S:\ell^2(\Z)\rightarrow\ell^2(\Z)$ so that
$$
(S\psi)_n=\sum_{p\in\Z}G(n,p)\psi_p,\ \psi\in\ell^2(\Z).
$$
First, we show that $(H_v-E)\circ S=Id$, i.e. $S$ is the inverse of $H_v-E$. For each $p\in\Z$, consider the squence $(G(p,n))_{n\in\Z}$. By the construction \eqref{eq:green} of $G$, it holds true that

\begin{align*}
[(H_v-E)G(p,\cdot)](n)&=G(p,n+1)+G(p,n-1)+(v(n)-E)G(p,n)\\
&=
\begin{cases}
\psi^u_p[\psi^{s}_{n+1}+\psi^{s}_{n-1}+(v(n)-E)\psi^s_n], &\ n\ge p+1\\
\psi^s_p[\psi^{u}_{n+1}+\psi^{u}_{n-1}+(v(n)-E)\psi^u_n], &\ n\le p-1.\\
\psi^s_{p+1}\psi^u_p+\psi^s_p[\psi^u_{p-1}+(v(p)-E)\psi^u_p], &\ n=p,
\end{cases}\\
&=
\begin{cases}
	0, &\ n\neq p\\
	\psi^s_{p+1}\psi^u_p-\psi^s_p\psi^u_{p+1}, &\ n=p,
	\end{cases}\\
&=\delta_{pn},
\end{align*}
where $\delta_{pn}$ is the standard notation which is $0$ if $n\neq p$ and is $1$ if $n=p$. Note in the last equality, we use \eqref{eq:determinant1}. Thus, for any $\phi=(\phi_n)\in\ell^2(\Z)$, it holds that
\begin{align*}
[(H_v-E)\circ S(\phi)]_n&=(S\phi)_{n+1}+(S\phi)_{n-1}+[v(n)-E](S\phi)_n\\
&=\sum_{p\in\Z}\left[G(p,n+1)+G(p,n-1)+(v(n)-E)G(p,n)\right]\phi_p\\
&=\sum_{p\in\Z}\delta_{pn}\phi_n\\
&=\phi_n,
\end{align*}
as desired.
 Next we show that $S$ is bounded. Let $\|G(n,\cdot)\|_{\ell^1}$ be the $\ell^1$ norm of the sequence $(G(n,p))_{p\in\Z}$. By Lemma~\ref{l:exponential_decay_of_Green's_Function}, $\|G(n,\cdot)\|_{\ell^1}<\frac{C}{\gamma}$ for all $n\in\Z$. Note the upper bound is independent of $n$. Then for all $\psi\in\ell^2(\Z)$, it holds that
\begin{align*}
\|S(\psi)\|^2&=\sum_{n\in\Z}|(S\psi)_n|^2=\sum_{n\in\Z}\left|\sum_{p\in\Z}G(n,p)\psi_p\right|^2\\
&\le \sum_{n\in\Z}\left(\sum_{p\in\Z}|G(n,p)|^\frac12|G(n,p)|^\frac12|\psi_p|\right)^2\\
&\le \sum_{n\in\Z}\left(\sum_{p\in\Z}|G(n,p)|\right)\left(\sum_{p\in\Z}|G(n,p)||\psi_p|^2\right)\\
&\le \frac{C}{\gamma}\sum_{n\in\Z}\sum_{p\in\Z}|G(n,p)||\psi_p|^2\\
&= \frac{C}{\gamma}\sum_{p\in\Z}\sum_{n\in\Z}|G(n,p)||\psi_p|^2\\
&= \frac{C}{\gamma}\sum_{p\in\Z}|\psi_p|^2\sum_{n\in\Z}|G(n,p)|\\
&\le \left(\frac{C}{\gamma}\right)^2\sum_{p\in\Z}|\psi_p|^2\\
&=\left(\frac{C}{\gamma}\right)^2\|\psi\|^2.
\end{align*}
Note we use Cauchy-Schwarz's inequality for the second inequality and Fubini's Theorem for the third equality. 
Hence, $H_v-E$ is invertible with the bounded inverse $S$, which implies that $E\in\rho(H_v)$. Here $G$ is the so called Green's function for $H_v-E$. 

Note that the estimate above shows that norm of the operator $S$ is related to the constant $\gamma$ via $\gamma<\frac{C}{\|S\|}$. Thus, as $E$ gets close to the spectrum, $\|S\|$ tends to $\infty$, the stable and unstable directions will tend to each other (at least somewhere).

\subsection{Uniform Hyperbolicty Away From The Spectrum}\label{ss:NotUHtoSpectrum} Now we show the other direction.  We will provide two different proofs. In Section~\ref{sss:resolvent_to_uh_1}, we show i.e. $\{E:A^{(E-v)}\notin\CU\CH\}\subset\sigma(H_v)$ via Theorem~\ref{l.nontrivialbo}. In Section~\ref{sss:resolvent_to_uh_2}, equivalently we show $\rho(H_v)\subset \{E:A^{(E-v)}\in\CU\CH\}$ via Combes-Thomas type of estimates.

\subsubsection{Non Uniform Hyperbolicty Implies Spectrum}\label{sss:resolvent_to_uh_1} We first have the following simple lemma. We omit the proof as it is an easy consequence of the Weyl's Criterion (see, for example, \cite{reedsimon}).

\begin{lemma}\label{l.finitesae}
	$E\in\sigma(H_v)$ if and only if for each $\varepsilon>0$, there exists a finitely supported unit vector $\psi\in\ell^2(\Z)$ such that
	$$
	\|(H_v-E)\psi\|<\varepsilon.
	$$
\end{lemma}

Then we need to embed $v$ to its Hull. Let $\Omega=\mathrm{Hull}(v)$, which in this case is clearly a compact metric space. Let $(\Omega, T, f)$ as defined Section~\ref{ss:johnson} and consider the Schr\"odinger cocycle $(T,A^{(E-f)})$. Clearly, $A^{(E-v)}\notin\CU\CH$ implies that $(T,A^{(E-f)})\notin\CU\CH$ since the former is a single orbit of the latter. By Theorem~\ref{l.nontrivialbo}, there is a $(\omega,\vec v)\in\Omega\times\mathbb S^1$ such that
$$
\|A^{(E-f)}_n(\omega)\vec v\|\le1,\ \mbox{for all } n\in\Z.
$$
Define $\psi\in\R^\Z$ such that
$$
\binom{\psi_n}{\psi_{n-1}}=A^{(E-f)}_n(\omega)\vec v,\ \mbox{for all } n\in\Z.
$$
Then it holds that $\|\psi\|_\infty\le 1$ and
$$
H_\omega \psi=E\psi.
$$
We claim that $E\in\sigma(H_\omega)$. Indeed, if $\|\psi\|\le C$, then $E$ is an eigenvalue of $H_\omega$. Hence, $E\in\sigma(H_v)$. Otherwise, if we define $\hat \psi^L$ as
$$
\hat \psi^L_n=\begin{cases}\psi_n, & {\rm if}\ |n|\le L,\\ 0, &{\rm otherwise},\end{cases}
$$
then $\|\hat \psi^L\|\rightarrow\infty$ as $L\rightarrow\infty$ and
$$
[(H_\omega-E)\hat \psi^L]_n=\begin{cases}\pm \psi_n, & {\rm if}\ n=\pm L, \pm (L+1)\\ 0, &{\rm otherwise}.\end{cases}
$$
Thus if we define $\psi^L=\frac{\hat \psi^L}{\|\hat \psi^L\|}$, then it holds that
$$
\|(H_\omega-E)\psi^L\|\le\frac{C}{\|\hat \psi^L\|},
$$
which can be arbitrary small as $L\rightarrow\infty$. By Lemma~\ref{l.finitesae}, $E\in\sigma(H_\omega)$. 

In both cases, by Lemma~\ref{l.finitesae}, for all $\varepsilon>0$, there exists a finitely supported unit vector $\psi\in\ell^2(\Z)$ such that $\|(H_\omega-E)\psi\|<\varepsilon$. Then $\omega\in\mathrm{Hull}(v)$ implies that there exists a $\{N_l\}_{l\in\Z}$ such that $T^{N_l}(v)$ converges to $\omega$ in the product topology. Since $\psi$ is finitely supported, we may choose $l$ large so that
$$
\|(H_{T^{N_l}(v)}-E)\psi\|<\varepsilon.
$$
Equivalently, we have
$$
\|(H_v-E)[T^{-N_l}(\psi)]\|<\varepsilon,
$$
where $(T\psi)_n=\psi_{n+1}$ is an unitary operator on $\ell^2(\Z)$. Hence $T^{-N_l}(\psi)$ is again finitely supported with norm $1$ which implies that $E\in\sigma(H_v)$ by Lemma~\ref{l.finitesae}.

\subsubsection{Uniform hyperbolicity via Combes-Thomas Estimate}\label{sss:resolvent_to_uh_2}
We wish to point out that the proof contained in this section is self-contained and is essentially independent of other parts of the paper. In fact, Definition~\ref{d.uhsequence}, Section~\ref{ss:uh_to_resolvent},  and Section~\ref{sss:resolvent_to_uh_2} together could provide a $5$ page complete proof of Theorem~\ref{t.uniformhers}. On the other hand, deep analysis of the uniformly hyperbolic $\mathrm{SL}(2,\R)$ sequences and cocycles may provide more insights regarding the dynamics behind the ergodic type of Schr\"odinger operators.

Fix a $E\in\rho(H_v)$. First,we perform a Combes-Thomas type of estimate concerning the exponential decay of the Green's Function.

Define $M_\b$ to be the multiplication operator $(M_\b\psi)(n)=e^{\b n}\psi_n$. Without loss of generality, we may assume $|\beta|\le 1$. A direct computation shows that
$$
M_{-\b}(H_v-E)M_\b=H_v-E+(e^\b-1)T+(e^{-\b}-1)T^{-1}=H_v-E+B,
$$
where again $T$ is the left shift. The operator $B$ is bounded on $\ell^2(\Z)$ and 
$$
\|B\|\le |(e^\b-1)|+|(e^{-\b}-1)|\le C|\b|.
$$
Clearly, $\|(H_v-E)^{-1}B\|\le \frac12$ if $\b\le \|(H_v-E)^{-1}\|^{-1}/(2C)$. Then
$$
M_{-\b}(H_v-E)M_\b=H_v-E+B=(H_v-E)[I+(H_v-E)^{-1}B]
$$
is invertible. Moreover 
$$
(M_{-\b}(H_v-E)M_\b)^{-1}=M_{-\b}(H_v-E)^{-1}M_\b=[I+(H_v-E)^{-1}B]^{-1}(H_v-E)^{-1},
$$
which implies
$$
\|M_{-\b}(H_v-E)^{-1}M_\b\|\le 2\|(H_v-E)^{-1}\|:=K.
$$
Hence, it holds for all $p,q\in\Z$ that
\begin{align*}
|\langle\delta_p, M_{-\b}(H_v-E)^{-1}M_\b\delta_q\rangle|&=|\langle M_{-\b}\delta_p, (H_v-E)^{-1}M_\b\delta_q\rangle|\\
&=|(H_v-E)^{-1}(p,q)|e^{-\b(p-q)}\\
&\le K
\end{align*}
which gives exponential decay of the Green's Function:
\beq\label{eq:green_decay}
	|(H_v-E)^{-1}(p,q)|\le K e^{-\b|p-q|}.
	\eeq
Those estimates above are  known as the Combes-Thomas type of estimates \cite{combesthomas}.

Let $g_j(n)=(H_v-E)^{-1}(n,j)$. Then $(g_j(n))_{n\in\Z}$ is the unique solution of the equation 
\beq\label{eq:g}
(H_v-E)g_j=\delta_j,
\eeq
where $\delta_j$ is the vector that $\delta_j(m)=1$ if $m=j$ and $0$ otherwise. By \eqref{eq:green_decay}, it holds that
\beq\label{eq:decay_g_j}
|g_j(n)|<Ke^{-\b|n-j|},\mbox{ for all } n,j\in\Z.
\eeq

For each $j\in\Z$, we define $\vec v(j)$ and $\vec w(j)\in\R^2$ so that
$$
\vec v(j)=\binom{g_{j-1}(j)}{g_{j-1}(j-1)}\mbox{ and } \vec w(j)=\binom{g_{j}(j)}{g_j(j-1)}.
$$
Note that $\|\vec v(j)\|\le 2K$ and $\|\vec w(j)\|\le 2K$ for all $j\in\Z$. By \eqref{eq:g}, It holds for each $j\in\Z$ that
\beq\label {eq:middle}
A^{(E-v)}(j-1)\binom{g_{j-1}(j-1)}{g_{j-1}(j-2)}=\binom{g_{j-1}(j)-1}{g_{j-1}(j-1)}.
\eeq
Recall, we assumed that $\|A\|<M$ for all $\mathrm{SL}(2,\R)$ matrices $A$ in question. Thus, \eqref{eq:middle} implies that for some constant $C=C(M)$, it holds that either
$$
C^{-1}\le \|\vec v(j)\|=\left\|\binom{g_{j-1}(j)}{g_{j-1}(j-1)}\right\|\le 2K,
$$
or $C^{-1}<|g_{j-1}(j-2)|$ which in turn implies that
$$
C^{-1}\le \|\vec v(j-1)\|=\left\|\binom{g_{j-2}(j-1)}{g_{j-2}(j-2)}\right\|\le 2K.
$$
Same argument yields similar estimates for $\|\vec w(j)\|$ or $\|\vec w(j+1)\|$.

Now for each $j\in\Z$, we define $\vec s(j)$ so that 
$$
\vec s(j)=
\begin{cases}\vec v(j), &\mbox{ if } \|\vec v(j)\|>C^{-1}\\ 
	A^{(E-v)}(j-1)\vec v(j-1),&\mbox{ otherwise.}
\end{cases}
$$
Similarly, we define $\vec u(j)$ so that 
$$
\vec u(j)=
\begin{cases}\vec w(j), &\mbox{ if } \|\vec w(j)\|>C^{-1}\\ 
A^{(E-v)}(j)^{-1}\vec w(j+1),&\mbox{ otherwise.}
\end{cases}
$$

Thus for all $j\in\Z$, we have 
\beq\label{eq:large_inv_vec}
\|\vec s(j)\|>C^{-1} \mbox{ and }  \|\vec u(j)\|>C^{-1}.
\eeq
In particular, $\vec s(j)\neq \binom{0}0$ and $\vec u(j)\neq \binom00$. Thus for each $j\in\Z$, we may define $s(j)\in\R\PP^1$ to be the direction of $\vec s(j)$ and $u(j)$ be the one of $\vec u(j)$. Then the desired result of this section is a consequence of the following lemma.

\begin{lemma}
$s, u:\Z\to\R\PP^1$ are the stable and unstable directions for $A^{(E-v)}$ as in Definition~\ref{d.uhsequence}. In particular, $A^{(E-v)}\in\CU\CH$.
	\end{lemma}

\begin{proof}
	First we show invariance. We first consider the stable direction $s(j)$. A direct computation shows that 
	$$
	A^{(E-v)}(j)\vec s(j)=\binom{g_{p}(j+1)}{g_{p}(j+2)},
	$$
where $p=j-1$ if $\vec s(j)=\vec v(j)$ and $p=j-2$ otherwise. In both cases, the right-hand side of the equality above must be linearly dependent with $\vec s(j+1)$. Indeed, in all cases and by the fact $A^{(E-v)}(j)\in\mathrm{SL}(2,\R)$,  it must holds for all $n>j$ that 
\begin{align*}
\det [A^{(E-v)}(j)\vec s(j), \vec s(j+1)]
&=\det \left[A^{(E-v)}_{n-j-1}(j+1)\cdot \left(A^{(E-v)}(j)\vec s(j), \vec s(j+1)\right)\right]\\
&=\det \left[A^{(E-v)}_{n-j-1}(j+1)\cdot \begin{pmatrix}g_p(j+1), & g_q(j+1) \\ g_p(j+2), & g_q(j+2)\end{pmatrix}\right]\\
&=\det \left[A_{n-j-1}^{(E-v)}(j+1)\binom{g_p(j+1)}{g_p(j+2)}, A^{(E-v)}_{n-j-1}(j+1)\binom{g_p(j+1)}{g_p(j+2)}\right]\\
&=\det\begin{pmatrix}g_{p}(n), & g_{q}(n)\\ g_{p}(n+1), & g_{q}(n+1)\end{pmatrix},
\end{align*}
 where $p=j-1$ or $j-2$ and $q=j-1$ or $j$. By \eqref{eq:decay_g_j}, the last determinant clearly goes to $0$ as $n\to\infty$.  This implies that the first determinannt is $0$ as it is a constant independent of $n$. This implies that $A^{(E-v)}(j)\vec s(j)$ and $\vec s(j+1)$ are linearly dependent. In other words, $A^{(E-v)}\cdot s(j)=s(j+1)$ which is nothing other than the invariance of $s:\Z\to\R\PP^1$. Simiarly, by letting $n\to-\infty$, we see that $A^{(E-v)}(j-1)^{-1}\vec u(j)$ and $\vec u(j-1)$ are linearly dependent which implies the invariance of the direction $u:\Z\to\R\PP^1$.

Next, we show that exponential decay. Again, it suffices to consider the stable direction $s(j)$ as the argument for $u(j)$ is completely analogous. Bascially in the end of the proof, instead of letting $n\to\infty$, one just need to consider $n\to-\infty$. 

By \eqref{eq:decay_g_j} and \eqref{eq:large_inv_vec}, it holds uniformly for all $j\in\Z$ and all $n\ge 1$ that
$$
\left\|A^{(E-v)}_n(j)\frac{\vec s(j)}{\|\vec s(j)\|}\right\|=\frac1{\|\vec s(j)\|}\left\|\binom{g_{p}(j+n)}{g_{p}(j+n-1)}\right\|<2KCe^{-\beta n},
$$
where $p=j-1$ or $j-2$. This concludes the proof as $M$ and $K$ are independent of $j$ and $n$.
\end{proof}
Note that the estimate also shows that the decaying rate is closely related to $\b$ which is of the order $\|(H_v-E)^{-1}\|^{-1}=\mbox{dist}(E,\sigma(H_v))$. Note also, by Remark~\ref{r.uneqs}, it is automatically true that $s(j)\neq u(j)$ for all $j\in\Z$.

\bigskip

\subsection{Potentials defined dynamically}

Now, we are ready to deduce Theorem~\ref{t.uniformhers2}. Let us start with the following enhanced version of Lemma~\ref{l.finitesae}.

\begin{lemma}\label{l.uniformfsae}
For all $v\in[-M,M]^\Z$, $E\in\sigma(H_v)$ if and only if for each $\varepsilon>0$, there exists a $L=L(M,\varepsilon)$ so that the following holds true. There exists an unit vector $\psi\in\ell^2(\Z)$ supported on an interval $I\subset\Z$ with $|I|\le L$ so that $\|(H_v-E)\psi\|<\varepsilon$.
\end{lemma}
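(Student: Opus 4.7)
The ``if'' direction is immediate from Lemma~\ref{l.finitesae}. For the ``only if'' direction my plan is to argue by contradiction, exploiting compactness of the ambient space $\Omega_M := [-M,M]^\Z$ in the product topology. Suppose the uniform $L$ fails: then there are $\varepsilon_0 > 0$ and sequences $v_k \in \Omega_M$, $E_k \in \sigma(H_{v_k}) \subset [-M-2, M+2]$, such that
$$
\|(H_{v_k} - E_k) u\| \ge \varepsilon_0
$$
for every unit vector $u \in \ell^2(\Z)$ supported on an interval of length at most $k$.

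Next I would apply Theorem~\ref{t.uniformhers} and Lemma~\ref{l.nontrivialbo}: since $A^{(E_k - v_k)}\notin\CU\CH$ as a sequence, the cocycle $(T, A^{(E_k-f)})$ over $\mathrm{Hull}(v_k)\subset\Omega_M$ is not $\CU\CH$, so there exist $\omega_k \in \mathrm{Hull}(v_k)$ and $w_k \in \mathbb S^1$ with $\|A^{(E_k-f)}_n(\omega_k) w_k\| \le 1$ for every $n \in \Z$. Compactness of $\Omega_M \times \mathbb S^1 \times [-M-2, M+2]$ lets me pass to a subsequence with $\omega_k \to \omega_\infty$, $w_k \to w_\infty$, $E_k \to E_\infty$. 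Since each $A^{(E-f)}_n(\omega)$ is a finite product of Schr\"odinger matrices, continuous in $(E, \omega)$ in the product topology, the limit satisfies $\|A^{(E_\infty - f)}_n(\omega_\infty) w_\infty\| \le 1$ for all $n$, hence produces a bounded formal solution $u \in \R^\Z$ of $H_{\omega_\infty} u = E_\infty u$ with $\|u\|_\infty \le C$.

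I would then truncate and renormalize $u$ exactly as in the proof of Theorem~\ref{t.uniformhers} to obtain a finitely supported unit vector $\phi$ supported on some interval $[-L_0, L_0]$ satisfying $\|(H_{\omega_\infty} - E_\infty) \phi\| < \varepsilon_0/4$. Here $L_0$ depends on the limit data, which is harmless since we are after a contradiction and not a quantitative bound. To transfer $\phi$ back to $v_k$: since $\omega_k \in \mathrm{Hull}(v_k)$ there is $N_k \in \Z$ with $T^{N_k}(v_k)|_{[-L_0, L_0]} = \omega_k|_{[-L_0, L_0]}$, and together with $\omega_k \to \omega_\infty$ in product topology this yields $T^{N_k}(v_k)|_{[-L_0, L_0]} = \omega_\infty|_{[-L_0, L_0]}$ for all large $k$. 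Because $\phi$ is supported in $[-L_0, L_0]$, we have $H_{T^{N_k} v_k}\phi = H_{\omega_\infty}\phi$, so unitarity of the shift on $\ell^2(\Z)$ together with $|E_k - E_\infty| < \varepsilon_0/4$ give
$$
\|(H_{v_k} - E_k)(T^{-N_k}\phi)\| = \|(H_{T^{N_k} v_k} - E_k) \phi\| < \varepsilon_0/2.
$$
But $T^{-N_k}\phi$ is a unit vector supported on an interval of length $2L_0 + 1$, so for any $k > 2L_0 + 1$ this contradicts the standing hypothesis on $v_k$.

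The main obstacle is precisely the uniformity of $L$ across all $v \in [-M,M]^\Z$: Lemma~\ref{l.finitesae} alone produces a finitely supported approximate eigenvector with no control on its support length, and what supplies the needed uniformity is the compactness of $\Omega_M$ combined with Lemma~\ref{l.nontrivialbo}, which together let one extract a single limiting bounded formal eigenfunction whose truncation contradicts all large-$k$ members of the putative counterexample simultaneously.
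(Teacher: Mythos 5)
Your strategy is the paper's: argue by contradiction, pass to the hull, use Lemma~\ref{l.nontrivialbo} (via the mechanism in the proof of Theorem~\ref{t.uniformhers}) to produce a bounded generalized eigenfunction for each putative counterexample, extract a limit by compactness of $[-M,M]^\Z\times\mathbb S^1\times[-M-2,M+2]$, truncate to get a finitely supported approximate eigenvector, and transfer back to contradict the assumption on $v_k$.

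One technical slip in the transfer step: $\omega_k\in\mathrm{Hull}(v_k)$ does \emph{not} give an $N_k$ with exact equality $T^{N_k}(v_k)|_{[-L_0,L_0]}=\omega_k|_{[-L_0,L_0]}$ (the hull is the closure of the orbit, not the orbit itself, and the entries are real), and convergence $\omega_k\to\omega_\infty$ in product topology does not give eventual equality on a finite window. What you actually have is approximation, and that suffices: since $\phi$ is supported on $[-L_0,L_0]$, the quantity $\|(H_x-E)\phi\|$ depends continuously on the finitely many entries $x|_{[-L_0-1,L_0+1]}$ and on $E$; choose $N_k$ so that $T^{N_k}(v_k)$ is close to $\omega_k$ on that window, and combine with $\omega_k\to\omega_\infty$, $E_k\to E_\infty$ to get $\|(H_{T^{N_k}v_k}-E_k)\phi\|<\varepsilon_0/2$ for all large $k$. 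With this fix the proof is correct and matches the paper's argument; the paper organizes the same ideas slightly differently (it transfers the ``no short approximate eigenvector'' property to the limit $\omega$ and contradicts Lemma~\ref{l.finitesae} there, rather than transporting $\phi$ back to the $v_k$'s), but the substance is identical.
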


\begin{proof}
By Lemma~\ref{l.finitesae}, we only need to show the only if part. In fact, we only need to show that $L=L(M,\varepsilon)$ is independent of $(v,E)\in[-M,M]^\Z\times\sigma(H_v)\subset [-M,M]^\Z\times[-M-2,M+2]$.

Assume the above mentioned fact is false.  Then there exists an $\varepsilon>0$ with the following property. For each $l\in\Z^+$, there exists a $(v^l,E^l)$, $E^l\in\sigma(H_{v^l})$ such that if any unit vector $\psi\in\ell^2(\Z)$ satisfies $\|(H_{v^l}-E^l)\psi\|<\varepsilon$, then $\psi$ is not supported on any interval $I\subset\Z$ of length less than or equal to $l$.

Let $\Omega_l=\mbox{Hull}(v_l)$ as usual. Since the orbit of $v_l$ is dense in $\Omega_l$, by a standard continuity argument, one may see that for each $\omega\in\Omega_l$, any unit vector $\psi$ satisfying $\|(H_\omega-E^l)\psi\|<\varepsilon$ cannot be supported on any interval $I\subset\Z$ of length less than or equal to $l$. 

On the other hand, $E_l\in\sigma(H_{v^l})$ implies that $A^{(E^l-v^l)}\notin\CU\CH$, hence $(T,A^{(E^l-f)})\notin\CU\CH$. By the same argument of Section~\ref{ss:NotUHtoSpectrum}, for each $l\in\Z^+$, there exists a $\omega^l\in\Omega_l$ and $\psi^l\in\ell^\infty(\Z)$ with $\|\psi^l\|_\infty\le1$ such that $(H_{\omega^l}-E^l)\psi^l=0$. From the construction of $\psi^l$, it holds that $\left\|\binom{\psi^l_0}{\psi^l_{-1}}\right\|=1$. Hence, shifting both $\omega^l$ and $\psi^l$ if necessary and rescaling $\psi^l$, we may assume for all $l\in\Z_+$ it holds that
$$
\psi^l_0=1\mbox{ and }\|\psi^l\|_\infty<C.
$$
By compactness, we may assume
$$
\lim_{l\rightarrow\infty}(\omega^l,\psi^l)=(\omega,\psi)\in[-M,M]^\Z\times[-1,1]^\Z\mbox{ and } \lim_{l\rightarrow\infty}E_l=E\in[-M-2,M+2],
$$
where the convergence of $(\omega^l,\psi^l)$ to $(\omega,\psi)$ is under the product topology. Thus, we must have
$$
(H_\omega-E)\psi=0,\ \psi_0=1,\mbox{ and } \|\psi\|_\infty<C.
$$
By Theorem~\ref{t.uniformhers}, or rather the proof contained in Section~\ref{sss:resolvent_to_uh_1}, $E\in\sigma(H_\omega)$. Moreover, we claim the following.
\vskip 2mm

\noindent \emph{For all sufficiently large $l\in\Z^+$, any unit vector $\phi$ satisfying $\|(H_\omega-E)\phi\|<\varepsilon$ cannot be supported on any interval $I\subset\Z$ of length less than or equal to $l$. }
\vskip 2mm

This claim clearly contradicts with the fact $E\in\sigma(H_\omega)$ and Lemma~\ref{l.finitesae}. So the proof will be completed if we can show the claim holds true.

Indeed, if the claim is not ture, then there exists $L$ so that $\|(H_\omega-E)\phi\|<\varepsilon$ for some $\phi$ supported on an interval with length less than or equal to $L$. Since $\omega^l$ tends to $\omega$ in product topology and $E_l$ tends to $E$, we must have $\|(H_{\omega^l}-E_l)\phi\|<\varepsilon$ for all $l$ sufficiently large. However, for any $l>L$, the existence of such $\phi$ contradicts with the choice of $\omega^l$ and $E_l$.

\end{proof}

As far as we know, Lemma~\ref{l.uniformfsae} was first stated and used as \cite[Lemma 12]{aviladamanikzhang}. It is particularly useful if one wants to prove some continuity property of the spectrum.

\bigskip

Now, we go back to the scenario of Theorem~\ref{t.uniformhers2}. In other words, we have a compact metric space $(\Omega,d)$ with distance $d$, $T:\Omega\rightarrow\Omega$ a homeomorphism, and $f:\Omega\rightarrow\R$ a continuous function. Abusing the notation lightly, $T$ also denotes the left shift operator on the sequence space. Note $(\Omega,T)$ is said to be \emph{topological transitive} if there is a dense $T$--orbit. $(\Omega,T)$ is said to be \emph{minimal} if each $T$--orbit is dense.

For $\omega\in\Omega$, we consider the Schr\"odinger operator $H_\omega$ defined in \eqref{eq:operator} and the associated Schr\"odinger cocycle  $(T,A^{(E-f)})$ as in \eqref{eq:schrodinger_cocycle}.

\begin{theorem}\label{t.densesdo}
Let $(\Omega,T,f)$ be as above. Then for each $\varepsilon>0$, there exists a $\delta>0$ so that the following holds true. If the orbit $\mathrm{Orb}(\omega_0)=\{T^n(\omega_0),\ n\in\Z\}$ of some $\omega_0$ satisfies: 
$$
\mathrm{Orb}(\omega_0)\cap B_\delta(\omega)\neq\varnothing
$$
for all $\omega\in\Omega$, where $B_\delta(\omega)$ is the ball of radius $\delta$ around $\omega$ inside $\Omega$. Then for all $\omega\in\Omega$,
$$
\sigma(H_\omega)\subset B_\varepsilon[\sigma(H_{\omega_0})],
$$
where $B_\varepsilon(S)$ is the ball around the set $S\subset\R$ with the usual distance.
\end{theorem}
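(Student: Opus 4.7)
The plan is to use Lemma~\ref{l.uniformfsae} to reduce spectral membership to the existence of a unit approximate eigenvector supported on an interval of length $L$ that depends only on $M=\|f\|_\infty$ and $\varepsilon$, and then to transplant such a vector from $H_\omega$ to $H_x$ using the $\delta$-denseness of the orbit of $x$.

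Given $\varepsilon>0$, apply Lemma~\ref{l.uniformfsae} with tolerance $\varepsilon/2$ to obtain $L=L(M,\varepsilon/2)$. The finite family $\{f\circ T^n:0\le n\le L-1\}$ is equicontinuous on the compact metric space $\Omega$, so I pick $\delta>0$ such that $d(\omega_1,\omega_2)<\delta$ forces $|f(T^n\omega_1)-f(T^n\omega_2)|<\varepsilon/2$ for every $n\in\{0,\dots,L-1\}$. This $\delta$ depends only on $\varepsilon$, $f$, and the dynamics, and in particular not on $\omega$ or $E$.

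Now assume $\mathrm{Orb}(x)$ meets every $\delta$-ball. Fix any $\omega\in\Omega$ and any $E\in\sigma(H_\omega)$; Lemma~\ref{l.uniformfsae} supplies a unit vector $u$ supported on some interval $[a,a+L-1]$ with $\|(H_\omega-E)u\|<\varepsilon/2$. The index shift $v_n:=u_{n+a}$ is supported on $[0,L-1]$ and satisfies $\|(H_{T^a\omega}-E)v\|<\varepsilon/2$. Using $\delta$-denseness at the point $T^a\omega$, pick $N\in\Z$ with $d(T^N x,T^a\omega)<\delta$, and set $w_k:=v_{k-N}$, a unit vector supported on $[N,N+L-1]$. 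A direct index-by-index check yields, for every $n\in\Z$,
$$
[(H_x-E)w]_{N+n}-[(H_{T^a\omega}-E)v]_n=[f(T^{N+n}x)-f(T^{n+a}\omega)]v_n,
$$
where both sides vanish outside $n\in[0,L-1]$ (the boundary indices $n=-1,L$ contribute no potential term since $v_{-1}=v_L=0$). The choice of $\delta$ then bounds the right side in $\ell^2$ norm by $(\varepsilon/2)\|v\|_2=\varepsilon/2$, so the triangle inequality gives $\|(H_x-E)w\|<\varepsilon$. Self-adjointness of $H_x$ yields $\mathrm{dist}(E,\sigma(H_x))\le\|(H_x-E)w\|<\varepsilon$, i.e.\ $E\in B_\varepsilon[\sigma(H_x)]$.

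The essential input is the uniformity of $L$ in Lemma~\ref{l.uniformfsae}; without it, $L$—and hence the required $\delta$—could blow up as $\omega$ or $E$ vary, and no single $\delta$ would serve for the theorem. Once that uniformity is in hand, the argument reduces to equicontinuity of the fixed finite collection $\{f\circ T^n\}_{n=0}^{L-1}$ on a compact metric space and the standard self-adjoint bound $\mathrm{dist}(E,\sigma(H))\le\|(H-E)w\|/\|w\|$.
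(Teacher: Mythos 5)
Your proof is correct and follows essentially the same strategy as the paper's: invoke Lemma~\ref{l.uniformfsae} to get a unit approximate eigenvector of $H_\omega-E$ with uniformly bounded support length, shift it to a position where the orbit of $x$ is $\delta$-close, and conclude via a distance-to-spectrum bound for self-adjoint operators. You are actually a bit more careful than the paper, splitting the tolerance into $\varepsilon/2$ for Lemma~\ref{l.uniformfsae} and $\varepsilon/2$ for the transplantation error, which makes the final triangle inequality tight; the paper loosely reuses the same $\varepsilon$ in both places and finishes with continuous functional calculus instead of the resolvent/distance bound, but the content is the same.
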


\begin{proof}
By compactness of $\Omega$, there exists a $M>0$ such that $\|f\|_\infty<M$. Hence, $\sigma(H_\omega)\subset[-M-2, M+2]$ for all $\omega\in\Omega$. Now by Lemma~\ref{l.uniformfsae}, for the given $\varepsilon$, there exists a $L=L(\varepsilon)$ such that the following holds true. For each $\omega\in\Omega$, $E\in\sigma(H_\omega)$ implies that $\|(H_\omega-E)\psi\|<\varepsilon$ for some unit $\psi\in\ell^2(\Z)$ which is supported in an interval with length less than or equal to $L$. Then there exists a $N\in\Z$ such that $T^{-N}\psi$ is supported on a interval around $0$ and 
$$
\|(H_{T^N\omega}-E)(T^{-N}\psi)\|<\varepsilon.
$$
Then by uniform continuity of $f$, there exists a $\delta>0$, independent of $\omega$, so that the following holds true: if $d(\omega',T^N\omega)<\delta$, then
$
\|(H_{\omega'}-E)(T^{-N}\psi)\|<\varepsilon.
$
In particular, there is some $n\in\Z$ so that $d(T^n\omega_0,T^N\omega)<\delta$ which in turn implies that 
$$
\|(H_{T^n\omega_0}-E)(T^{-N}\psi)\|<\varepsilon.
$$
Thus, we must have $E\in B_\varepsilon[\sigma(H_{T^n\omega_0})]$. Indeed, if $E\in\sigma(H_{T^n\omega_0})$, we are done. Otherwise, it is straightforward to see that the above inequality implies that
$$
\|(H_{T^n\omega_0}-E)^{-1}\|>1/\varepsilon.
$$
Let $g:\sigma(H_{T^n\omega_0})\rightarrow\R$ be the identity function on $\sigma(H_{T^n\omega_0})$. Then the continuous functional calculus implies
$$
\|(g-E)^{-1}\|_\infty>1/\varepsilon,
$$
which implies that $E\in B_\varepsilon[\sigma(H_{T^n\omega_0})]$. It is a standard fact that $H_{\omega_0}$ and $H_{T^n\omega_0}$ are unitary equivalent. Hence $\sigma(H_{\omega_0})=\sigma(H_{T^n\omega_0})$ and $E\in B_\varepsilon[\sigma(H_{\omega_0})]$, concluding the proof.
\end{proof}

With all the preparations, the proof of Theorem~\ref{t.uniformhers2} is now just a few lines.

\begin{proof}[Proof of Theorem~\ref{t.uniformhers2}]
Recall $\overline{\mathrm{Orb}(\omega_0)}=\Omega$ and $\Sigma=\sigma(H_{\omega_0})$. Hence, Theorem~\ref{t.densesdo} implies that $\sigma(H_\omega)\subset B_\e(\Sigma)$ for all $\e>0$ and all $\omega\in\Omega$, which in turn implies that $\sigma(H_\omega)\subset\Sigma$ for all $\omega\in\Omega$. Let
$$
A:\Z\rightarrow\mathrm{SL}(2,\R),\ A^E(n)=A^{(E-f)}(T^n\omega_0).
$$
Then, the fact $\overline{\mathrm{Orb}(\omega_0)}=\Omega$ and Theorem~\ref{t.uiformeg} together clearly imply that
$$
A^E\in\CU\CH\Longleftrightarrow (T,A^{(E-f)})\in\CU\CH.
$$
Hence , $\Sigma=\{E:(T,A^E)\notin\CU\CH\}=\{E:(T,A^{(E-f)})\notin\CU\CH\}$, where the first equality follows from Theorem~\ref{t.uniformhers}.
\end{proof}

The following corollary is an immediate consequence of Theorem~\ref{t.uniformhers2}.

\begin{corollary}\label{c.uniformhers3}
Let $(\Omega,T,f)$ be as in Theorem~\ref{t.uniformhers2}. Assume in addition that $(\Omega,T)$ is minimal, then $\sigma(H_\omega)$ is independent of $\omega\in\Omega$. Let $\Sigma$ denotes the common spectrum. Then we have
$$
\Sigma=\{E:(T,A^{(E-f)})\notin\CU\CH\}.
$$
\end{corollary}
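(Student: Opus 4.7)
The plan is to derive this as a direct consequence of Corollary~\ref{c.uniformhers2} by exploiting the symmetry that minimality provides. Under minimality, every orbit is dense in $\Omega$, so for each individual $x\in\Omega$ the hypothesis $\overline{\mathrm{Orb}(x)}=\Omega$ of Corollary~\ref{c.uniformhers2} is satisfied. The proposal has essentially two steps and no real obstacle.

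First, I would fix an arbitrary $\omega_0\in\Omega$ and apply Corollary~\ref{c.uniformhers2} with the choice $x=\omega_0$. Setting $\Sigma_{\omega_0}:=\sigma(H_{\omega_0})$, the corollary gives
$$
\sigma(H_\omega)\subset\Sigma_{\omega_0}\quad\text{and}\quad \Sigma_{\omega_0}=\{E:(T,A^{(E-f)})\notin\CU\CH\},\ \forall\omega\in\Omega.
$$
In particular, for any other $\omega_1\in\Omega$, we have $\sigma(H_{\omega_1})\subset\sigma(H_{\omega_0})$.

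Next, I would swap the roles: since minimality also gives $\overline{\mathrm{Orb}(\omega_1)}=\Omega$, applying Corollary~\ref{c.uniformhers2} again with $x=\omega_1$ yields $\sigma(H_{\omega_0})\subset\sigma(H_{\omega_1})$. Combining the two inclusions shows $\sigma(H_{\omega_0})=\sigma(H_{\omega_1})$, and as $\omega_0,\omega_1$ were arbitrary, the spectrum is independent of $\omega\in\Omega$. Denote the common value by $\Sigma$. The characterization $\Sigma=\{E:(T,A^{(E-f)})\notin\CU\CH\}$ is then immediate from the second conclusion of Corollary~\ref{c.uniformhers2} displayed above.

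There is no genuine obstacle here; the only thing to be careful about is that the characterization of $\Sigma$ via $\CU\CH$ is really a statement about the cocycle over the whole base $(\Omega,T)$ and does not depend on the choice of base point $x$, so once constancy of $\sigma(H_\omega)$ is established, the identification with $\{E:(T,A^{(E-f)})\notin\CU\CH\}$ is automatic.
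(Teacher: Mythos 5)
Your proposal is correct and matches the argument the paper intends: the paper gives no explicit proof, simply calling the result an immediate consequence of Corollary~\ref{c.uniformhers2}, and the symmetry argument you give (minimality makes every orbit dense, so Corollary~\ref{c.uniformhers2} applies with any $\omega_0$ as base point, yielding mutual inclusion of spectra and hence equality) is precisely what that remark amounts to. The identification $\Sigma=\{E:(T,A^{(E-f)})\notin\CU\CH\}$ then carries over unchanged since, as you note, the $\CU\CH$ condition refers to the cocycle over the whole base $(\Omega,T)$ and is independent of the chosen base point.
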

\begin{remark}
	To deduce Theorem~\ref{t.uniformhers2} and Corollary~\ref{c.uniformhers3}, we actually do not really use the full strength of Theorem~\ref{t.densesdo}. Concretely, to obtain $\sigma(H_\omega)\subset\sigma(H_{\omega_0})$ in the proof of Corollary~\ref{t.uniformhers2}, we do not really need the fact that the $\delta$ in the statement of Theorem~\ref{t.densesdo} is independent of $\omega$. If we allow $\delta$ to be dependent on $\omega\in\Omega$, then from the proof one may easily see that there is no need to involve Lemma~\ref{l.uniformfsae}. In fact, Lemma~\ref{l.finitesae} would suffice. However, we wish to provide Lemma~\ref{l.uniformfsae} and the stronger version of Theorem~\ref{t.densesdo} as it may be of independent interest.
	\end{remark}

\section{Avlanche Principle and  Uniformly Hyperbolic Sequence}\label{s:AP}

In this section, we prove Theorem~\ref{t:APNew}. Like the proof of Theorem~\ref{t.uiformeg}, the asymptotic stable and unstable directions play key roles. Basically, we are going to show that under conditions~\eqref{condition-AP3} and \eqref{condition-AP4}, the $1$-step stable and unstable directions as defined in \eqref{eq:AsympDirections} are separated to a certain distance which leads to uniform hyperbolicity. Then based on this information, one can deduce eventually \eqref{eq:AP}. We first need the following preparations.
\begin{lemma}\label{l:NormToAngle}
Let $D,B\in \SL(2,\R)$ satisfying 
\beq\label{eq:ProdNormLarge}
\|DB\|>C^2 \max\left\{\frac{\|D\|}{\|B\|},\ \frac{\|B\|}{\|D\|}\right\}.
\eeq
Then it holds that 
\beq\label{eq:anglenorm}
c|s(D)-u(B)|<\frac{\|DB\|}{\|D\|\|B\|}<C|s(D)-u(B)|
\eeq
and
\beq\label{eq:anglenorm2}
\left|\frac{\|DB\|}{\|D\|\|B\|}-|\sin[s(D)-u(B)]|\right|<C\left(\min\{\|D\|,\|B\|\}\right)^{-2}
\eeq
	\end{lemma}
\begin{proof}
Recall by \eqref{eq:polardecom}, for any $Q\in\SL(2,\R)$, it holds that 
$$
Q=R_{u(Q)}\begin{pmatrix}\|Q\| &0\\ 0 &\|Q\|^{-1}\end{pmatrix}R_{\frac\pi2-s(Q)}.
$$	
Hence, one has 
$$
DB=R_{u(D)}\begin{pmatrix}\|D\| &0\\ 0 &\|D\|^{-1}\end{pmatrix}R_{\frac\pi2-[s(D)-u(B)]}\begin{pmatrix}\|B\| &0\\ 0 &\|B\|^{-1}\end{pmatrix}R_{\frac\pi2-s(B)}.
$$
Let $\t=s(D)-u(B)$. By the form of $DB$ above, it is clearly that 
$$
R_{-u(D)}\cdot DB\cdot R_{s(B)-\frac\pi2}-\begin{pmatrix}\|D\|\|B\|\sin\t &0\\ 0 &0\end{pmatrix}=\begin{pmatrix}0 &-\frac{\|D\|}{\|B\|}\cos\t\\ \frac{\|B\|}{\|D\|}\cos\t &\frac{\sin\t}{\|D\|\|B\|}\end{pmatrix}.
$$
Since rotation matrices preserve the operator norm, triangle inequality then yields
\beq\label{eq:AngleNorm2}
 \left|\left(\|DB\|-\|D\|\|B\|\cdot|\sin[s(D)-u(B)]|\right)\right|<C\max\left\{\frac{\|D\|}{\|B\|},\ \frac{\|B\|}{\|D\|}\right\}.
\eeq
Given \eqref{eq:ProdNormLarge}, one can easily see that 
$$
c\|D\|\|B\|\cdot \left|\sin[s(D)-u(B)]\right|<\|DB\|<C\|D\|\|B\|\cdot \left|\sin[s(D)-u(B)]\right|.
$$ 
Thus, one obtains \eqref{eq:anglenorm} by the fact that $c|\t|<|\sin\t|<C|\t|$ for all $\t\in\R\PP^1$. 

Divide \eqref{eq:AngleNorm2} by $\|D\|\|B\|$ at both sides, we then obtain
$$
\left|\frac{\|DB\|}{\|D\|\|B\|}-|\sin[s(D)-u(B)]|\right|<C\max\{\|D\|^{-2},\|B\|^{-2}\},
$$
which is nothing other than \eqref{eq:anglenorm2}.
\end{proof}

Apply Lemma~\ref{l:NormToAngle} to the sequence given in Theorem~\ref{t:APNew}, we obtain
\begin{corollary}\label{c:NormToAngle}
	Let $A(j),j\in\Z$ be as in Theorem~\ref{t:APNew}, then it holds for all $j\in\Z$ that
	\beq\label{eq:ap_us_1st_sep}
	|s(j)-u(j)|>c\l^{-\frac12}.
	\eeq
	\end{corollary}
\begin{proof}
	It is straightforward computation to see that \eqref{condition-AP4} implies that
	$$
	\frac{\|A(j+1)A(j)\|}{\|A(j+1)\|\|A(j)\|}\ge \l^{-\frac12}.
	$$
In particular, since $\|A(j)\|\ge \l$ for each $j\in\Z$, we then obtain for all $j$ that
$$
\|A(j+1)A(j)\|\ge \|A(j+1)\|\|A(j)\|\l^{-\frac12}>\max\left\{\frac{\|A(j+1)\|}{\|A(j)\|},\ \frac{\|A(j)\|}{\|A(j+1)\|}\right\}.
$$
Thus the condition of Lemma~\ref{l:NormToAngle} is satisfied which in turn implies
$$
|s(j)-u(j)|=|s(A(j))-u(A(j-1))|>c\frac{\|A(j)A(j-1)\|}{\|A(j)\|\|A(j-1)\|}\ge c\l^{-\frac12}.
$$
\end{proof}

Note $\|A(j)\|>\l>C$ for all $j$ and \eqref{eq:ap_us_1st_sep} says that $|s(j)-u(j)|>c\l^{-\frac12}$ for all $j\in\Z$. Then by choosing $\l$ large, we can clearly have that $\l>\rho(\l^{-\frac12})$, where $\rho$ is from Lemma~\ref{l:inv_cone}. In other words, the conditions of Corollary~\ref{c:SeparationUStoUH} are satisfied for this sequence $A$. Following the proof of Theorem~\ref{t.uiformeg}, we may then obtain the uniform exponential growth of $A$, hence $A\in\CU\CH$. But to get \eqref{eq:AP}, we need quantitative estimates. 

We first need the next two lemmas which are special cases of \cite[Lemmas 3, 4]{wangzhang}. We include the proof for completeness since it is much simpler in this special case.
\begin{lemma}\label{l:NormDirectionControl1}
	Let $E=E_2E_1\in\mathrm{SL}(2,\mathbb R)$ such that $\|E_2\|, \|E_1\|>\l>C$ and $|s(E_2)-u(E_1)|>c\l^{-\frac12}$. Then it holds that
	\begin{align}
\label{eq:NormControl1}
	&\|E\|>c\|E_1\|\|E_2\|\cdot |s(E_2)-u(E_1)|>c\l^{\frac32},\\
	\label{eq:SControl1}
&|s(E_1)-s(E)|<C\|E_1\|^{-2}\cdot |s(E_2)-u(E_1)|^{-1},\\
	\label{eq:UControl1}
&|u(E_2)-u(E)|<C\|E_2\|^{-2}\cdot |s(E_2)-u(E_1)|^{-1}.
	\end{align}
	In particular, $|s(E_1)-s(E)|<C\l^{-\frac32}$ and $|u(E_2)-u(E)|<C\l^{-\frac32}$.
\end{lemma}
\begin{proof}
	Let $\t=s(E_2)-u(E_1)$. So $|\t|>c\l^{-\frac12}$.  Define $D$ to be
	$$
	D=
	\begin{pmatrix}
	\|E_2\|&0\\0&\|E_2\|^{-1}\end{pmatrix}R_{\frac\pi2-\t}\begin{pmatrix}\|E_1\|&0\\0&\|E_1\|^{-1}\
	\end{pmatrix}.
	$$
 By the conditions given in the lemma and a direct computation, we may see that 
\beq\label{eq:large_D_norm}
\|E\|=\|D\|>c\|E_2\|\|E_1\||\sin\t|>c\|E_2\|\|E_1\|\l^{-\frac12}>c\l^{\frac32},
\eeq
which takes care of \eqref{eq:NormControl1}. 

Let $\vec e=\binom{0}{1}$. By the form of $D$, it is clearly that
$$
\|D\vec e\|\le \|E_2\|\|E_1\|^{-1}|\cos\t|+\|E_2\|^{-1}\|E_1\|^{-1}|\sin\t|.
$$
Let $\gamma=|\frac\pi2-s(D)|$.  Then it holds that
\begin{align*}
|s(E_1)-s(E)|&=\left|s(E_1)-[s(DR_{\frac\pi2-s(E_1)})]\right|\\
&=\left|s(E_1)-\left[s(D)-\left(\frac\pi2-s(E_1)\right)\right]\right|\\
&=\left|s(D)-\frac\pi2\right|\\
&=\gamma.
\end{align*}

For $\b\in\R\PP^1$, let $\vec \beta$ be a unit vector in the direction of $\b$. Then it clear that
	$$
	\vec e=(\cos\gamma) \vec s(D)+(\sin\gamma)\vec s^{\perp}(D),
	$$ 
which implies that
\begin{align*}
\|D\vec e\|=\|(\cos\gamma) D\vec s(D)+(\sin\gamma)D\vec s^{\perp}(D)\|.
\end{align*}

If $\gamma\le \|E_1\|^{-2}$, then $|s(E_1)-s(E)|=\gamma\le \|E_1\|^{-2}$ which implies \eqref{eq:SControl1} as $|s(E_2)-u(E_1)|^{-1}>c$. So we only need to deal with the case where $\gamma>\|E_1\|^{-2}$. Then by \eqref{eq:large_D_norm} and the fact that $\|E_2\|>\l$, it holds that
\begin{align*}
|(\sin\gamma)D\vec s^{\perp}(D)|&>\|E_1\|^{-2}\|D\|\\
&>c\frac{\|E_2\|}{\|E_1\|}\l^{-\frac12}\\
&>c\frac{\l^{\frac32}}{\|E_1\|\|E_2\|}\\
&>\frac{c\l}{\|E_1\|\|E_2\|\l^{-\frac12}}\\
&>c\l \|D\|^{-1}\\
&>C|(\cos\gamma) D\vec s(D)|.
\end{align*}
Note $|\sin\t|>c|\t|>c\l^{-\frac12}>0$. Combining the above estimates together, we then obtain
\begin{align*}
|\gamma|&<C|\sin\gamma|\\
&=\frac{C}{\|E\|}|(\sin\gamma)D\vec s^{\perp}(D)|\\
&<\frac{C}{\|E\|}\|D\vec e\|\\
&<\frac{C}{\|E_1\|\|E_2\||\sin\t|}(\|E_2\|\|E_1\|^{-1}|\cos\t|+\|E_2\|^{-1}\|E_1\|^{-1}|\sin\t|)\\
&<\frac{C}{\|E_1\|^2|\sin\t|}+\frac{C}{\|E_1\|^2\|E_2\|^2}\\
&<\frac{C}{\|E_1\|^2|\sin\t|}\\
&<C\|E_1\|^{-2}|s(E_2)-u(E_1)|^{-1},
\end{align*}
which is nothing other than  \eqref{eq:SControl1}  since $\gamma=|s(E_1)-s(E)|$. Apply the same argument above to $E^{-1}$ and $D^{-1}$, one then obtain \eqref{eq:UControl1}, concluding the proof.
\end{proof}

The following lemma push the estimates in Lemma~\ref{l:NormDirectionControl1} to all $n\ge 2$.
\begin{lemma}\label{l:NormDirectionControln}
	Let $A(j),\ j\in\Z$ be as in Theorem~\ref{t:APNew}. Then it holds for each $j\in\Z$ and each $2\le n\in\Z_+$ that 
	\begin{align}
	\label{eq:NormControl} \|A_n(j)\|&\ge c\l^{\frac{n+1}{2}},\\
	\label{eq:SControl}|s_n(j)-s_{n-1}(j)|&<C\l^{-(n-1)},\\
		\label{eq:UControl}|u_n(j)-u_{n-1}(j)|&<C\l^{-(n-1)}.
	\end{align}
	\end{lemma}
\begin{proof}
We proceed by induction on $n$. Note for the case $n=2$, \eqref{eq:NormControl} and \eqref{eq:SControl} follow from \eqref{eq:NormControl1} and \eqref{eq:SControl1} by setting $E_1=A(j)$ and $E_2=A(j+1)$ and the fact $C\l^{-\frac32}<C\l^{-1}$. Similarly, \eqref{eq:UControl} follows from \eqref{eq:UControl1} if we set $E_1=A(j-2)$ and $E_2=A(j-1)$. 

Assuming that \eqref{eq:NormControl}-\eqref{eq:SControl} hold true for all $n=2,\ldots, k$ and all $j\in\Z$.  Then we want to move to the case $n=k+1$. First, it holds that
\begin{align}
\nonumber |u_{k}(j+k)-u_1(j+k)|&\le \sum^{k-1}_{l=1}|u_{l+1}(j+k)-u_l(j+k)|\\
\nonumber &\le C\l^{-\frac32}+\sum^{k-1}_{l=2}C\l^{-l}\\
\label{eq:AngleControl0}&\le C\l^{-\frac32}.
\end{align}
Note $u_1(j+k)=u(j+k)$. Consequently, by Corollary~\ref{c:NormToAngle} it holds that
\begin{align}
\nonumber |s(j+k)-u_k(j+k)|
&=|s(j+k)-u(j+k)+u(j+k)-u_k(j+k)|\\
\nonumber &\ge  |s(j+k)-u(j+k)|-|u(j+k)-u_k(j+k)|\\
\nonumber &\ge c\l^{-\frac12}-C\l^{-\frac32}\\
\label{eq:AngleControl}&\ge c\l^{-\frac12}.
\end{align}
Similarly, by the same argument of \eqref{eq:AngleControl0}, it holds that $|s_k(j-k-1)-s(j-k-1)|\le C\l^{-\frac32}$. Together with Corollary~\ref{c:NormToAngle}, we then obtain
\begin{align}
\nonumber |s_k(j-k-1)-u(j-k-2)|
&=|s_k(j-k-1)-s(j-k-1)+s(j-k-1)-u(j-k-2)|\\
\nonumber &\ge |s(j-k-1)-u(j-k-2)|-|s_k(j-k-1)-s(j-k-1)|\\
\nonumber &\ge c\l^{-\frac12}-C\l^{-\frac32}\\
\label{eq:AngleControl2}&\ge c\l^{-\frac12}.
\end{align}

Thus, we may apply Lemma~\ref{l:NormDirectionControl1} with $E_1=A_k(j)$ and $E_2=A(j+k)$ and get that
\begin{align*}
\|A_{k+1}(j)\|
&=\|A(k+j)A_k(j)\|\\
&>c\|A(k+j)\|\|A_k(j)\|\l^{-\frac12}\\
&>c\l\cdot\l^{\frac{k+1}{2}}\l^{-\frac12}\\
&=c\l^{\frac{k+2}{2}},
\end{align*}
which takes care of \eqref{eq:NormControl} for $n=k+1$.  Next, combine \eqref{eq:AngleControl} and \eqref{eq:SControl1} with $E_1=A_k(j)$ and $E_2=A(k+j)$, we obtain
\begin{align}
\nonumber |s_{k+1}(j)-s_k(j)|&=|s[A(k+j)A_k(j)]-s(A_k(j))|\\
\nonumber &<C\|A_k(j)\|^{-2}|\cdot |s(j+k)-u_k(j+k)|^{-1}\\
\nonumber &<C\l^{-(k+1)}\l^{\frac12}\\
&<C\l^{-k},
\end{align}
which clearly takes care of the \eqref{eq:SControl} for $n=k+1$. On the other hand, combine \eqref{eq:AngleControl} and \eqref{eq:UControl1} with $E_1=A(k-j-2)$ and $E_2=A_k(j-k-1)$, we obtain
\begin{align}
\nonumber |u_{k+1}(j)-u_k(j)|&=|u[A_k(j-k-1)A(j-k-2)]-u(A_k(j-k-1))|\\
\nonumber &<C\|A_k(j-k-1)\|^{-2}|\cdot |s_k(j-k-1)-u(j-k-2)|^{-1}\\
\nonumber &<C\l^{-(k+1)}\l^{\frac12}\\
&<C\l^{-k},
\end{align}
which takes of  \eqref{eq:UControl} for step $n=k+1$, concluding the proof.
	\end{proof}

Now, we are ready to prove Theorem~\ref{t:APNew}.
\begin{proof}[Proof of Theorem~\ref{t:APNew}]
By the discussion following Corollary~\ref{c:NormToAngle}, we already know that $\{A(j),\ j\in\Z\}$ is uniformly hyperbolic. In fact, we have more precise estimate. By \eqref{eq:NormControl}, it holds for all $j\in\Z$ and all $n\ge 1$ that
$$
\|A_n(j)\|>c\l^{\frac{n+1}{2}}>c(\sqrt\l)^{n}.
$$

For the proof of \eqref{eq:AP}, we first note it holds for all $j\in\Z$ and $n\in\Z_+$ that
\beq\label{eq:AP1}
\log\|A_n(j)\|=\log\|A(j+n-1)\|+\log\|A_{n-1}(j)\|+\log\frac{\|A(j+n-1)A_{n-1}(j)\|}{\|A(j+n-1)\|\|A_{n-1}(j)\|}.
\eeq
 We may apply \eqref{eq:AP1} to $\log\|A_{n-1}(j)\|$ and rewrite \eqref{eq:AP1} as 
 \beq\label{eq:AP2}
 \log\|A_n(j)\|=\log\|A_{n-2}(j)\|+\sum^{n-1}_{k=n-2}\log\|A(j+k)\|+\sum^{n-1}_{k=n-2}
 \log\frac{\|A(j+k)A_{k}(j)\|}{\|A(j+k)\|\|A_{k}(j)\|}.
 \eeq
 Apply this process repeatedly to $A_n(j), A_{n-1}(j),\ldots, A_2(j)$, we then obtain
\beq\label{eq:PreAP}
\log\|A_n(j)\|=\sum^{n-1}_{k=0}\log\|A(j+k)\|+\sum^{n-1}_{k=1}\log\frac{\|A(j+k)A_{k}(j)\|}{\|A(j+k)\|\cdot\|A_{k}(j)\|}.
\eeq
Now for each $k\ge1$, by the proof of Corollary~\ref{c:NormToAngle}, condition of Lemma~\ref{l:NormToAngle} is satisfied for the pair $A(j+k)$ and $A(j+k-1)$. Similarly, \eqref{eq:AngleControl} implies the condition is satisfied for $A(j+k)$ and $A_k(j)$ as well. Thus, applying \eqref{eq:anglenorm2} to both pairs, we obtain
$$
\left|\frac{\|A(j+k)A_{k}(j)\|}{\|A(j+k)\|\cdot\|A_{k}(j)\|}-|\sin[s(j+k)-u_k(j+k)]|\right|<C\l^{-2}
$$
and 
$$
\left|\frac{\|A(j+k)A(j+k-1)\|}{\|A(j+k)\|\cdot\|A(j+k-1)\|}-|\sin[s(j+k)-u(j+k)]|\right|<C\l^{-2}.
$$
On the other hand, it holds that
\begin{align*}
&\left| (|\sin[s(j+k)-u_k(j+k)]|- |\sin[s(j+k)-u(j+k)]|)\right|\\
&<\left| \sin[s(j+k)-u_k(j+k)]- \sin[s(j+k)-u(j+k)]\right|\\
&<\left|[s(j+k)-u_k(j+k)]- [s(j+k)-u(j+k)]\right|\\
&=|u_k(j+k)-u(j+k)|\\
&\le C\l^{-\frac32},
\end{align*}
where the last inequality follows from \eqref{eq:AngleControl0}. Combine the three inequalities above, we then obtain
$$
\left|\frac{\|A(j+k)A_{k}(j)\|}{\|A(j+k)\|\cdot\|A_{k}(j)\|}-\frac{\|A(j+k)A(j+k-1)\|}{\|A(j+k)\|\cdot\|A(j+k-1)\|}\right|<C\l^{-\frac32}.
$$
Apply \eqref{eq:anglenorm} and Corollary~\ref{c:NormToAngle} to $A(j+k)$ and $A(j+k-1)$, we obtain
$$
\frac{\|A(j+k)A(j+k-1)\|}{\|A(j+k)\|\|A(j+k-1)\|}>c|s(j+k)-u(j+k)|>c\l^{-\frac12}.
$$
It is straightforward calculus type of estimate that 
$$
|\log a -\log b|<C\left|\frac1b(a-b)\right| \mbox{ when } b>C|a-b|. 
$$
Thus for each $k\ge1$, the inequality above implies that
\begin{align*}
&\left|\log\frac{\|A(j+k)A_{k}(j)\|}{\|A(j+k)\|\cdot\|A_{k}(j)\|}-\log\frac{\|A(j+k)A(j+k-1)\|}{\|A(j+k)\|\cdot\|A(j+k-1)\|}\right|\\
&\le C\frac{\|A(j+k)\|\|A(j+k-1)\|}{\|A(j+k)A(j+k-1)\|}\cdot\left|\frac{\|A(j+k)A_{k}(j)\|}{\|A(j+k)\|\cdot\|A_{k}(j)\|}-\frac{\|A(j+k)A(j+k-1)\|}{\|A(j+k)\|\cdot\|A(j+k-1)\|}\right|\\
&\le
C\l^{\frac12}\l^{-\frac32}\\
&=C\l^{-1}.
\end{align*}
Combine \eqref{eq:PreAP} and the estimate above, we then obtain
\begin{align*}
&\left|\log\|A_n(j)\|-\sum^{n-1}_{k=0}\log\|A(j+k)\|-\sum^{n-1}_{k=1}\log\frac{\|A(j+k)A(j+k-1)\|}{\|A(j+k)\|\cdot\|A(j+k-1)\|}\right|\\
&=\left|\sum^{n-1}_{k=1}\log\frac{\|A(j+k)A_{k}(j)\|}{\|A(j+k)\|\cdot\|A_{k}(j)\|}-\sum^{n-1}_{k=1}\log\frac{\|A(j+k)A(j+k-1)\|}{\|A(j+k)\|\cdot\|A(j+k-1)\|}\right|\\
&\le \sum^{n-1}_{k=1}\left|\log\frac{\|A(j+k)A_{k}(j)\|}{\|A(j+k)\|\cdot\|A_{k}(j)\|}-\log\frac{\|A(j+k)A(j+k-1)\|}{\|A(j+k)\|\cdot\|A(j+k-1)\|}\right|\\
&\le C\frac{n-1}{\l}\\
&\le C\frac{n}{\l}.
\end{align*}
A direct computation shows that the first line in the estimate above is nothing other than 
$$
\left|\log\|A_n(j)\|+\sum_{k=1}^{n-2}\log\|A(j+k)\|-\sum_{k=0}^{n-2}\log\|A(j+k+1)A(j+k)\|\right|,
$$
concluding the proof.
\end{proof}	
\bigskip

\noindent\textit{\bf Acknowledgments.} Part of the works were carried out when the author was a graduate student at Northwestern. He wishes to thank his thesis advisor Amie Wilkinson for her support and many helpful discussions. He is deeply gratefuly to his co-advisor Artur Avila, who generously shared with the author many of the new ideas contained in this paper. In particular, many parts of the proofs of Lemma~\ref{l.distanceus}, Theorem~\ref{l.nontrivialbo}, and Theorem~\ref{t.densesdo} were suggested to the author by him. The author would like to thank Wilhelm Schlag for providing him with the whole Section~\ref{sss:resolvent_to_uh_2}, for reading some parts of the preprint, and for some useful conversations. The author also thanks Adam Black for finding a mistake in the proof of Lemma~\ref{l.existenceus}, thanks Yakir Forman for providing a correction of it, and thanks both of them for carefully reading the paper. Finally, the author wishes to thank the referees for many helpful comments which help improve the presentation of the paper greatly.

\end{document}